\DeclareMathOperator*{\argmax}{argmax}
\newcommand{\E}{\mathrm{E}}
\newcommand{\so}{\mathrm{o}}
\newcommand{\lo}{\mathrm{O}}
\newcommand{\ep}{\epsilon}
\newcommand{\de}{\delta}
\newcommand{\id}{\mathbb{I}}
\newcommand{\dinf}{D_{\mathrm{inf}}}
\newcommand{\dmin}{D_{\mathrm{min}}}
\newcommand{\hen}[2]{\frac{\partial #1}{\partial #2}}
\newcommand{\n}{\nonumber}
\newcommand{\nn}{\nonumber\\}
\newcommand{\scap}{\,\cap\,}
\newcommand{\jn}{J_n}
\newcommand{\fhat}{\hat{F}}
\newcommand{\muhat}{\hat{\mu}}
\newcommand{\fhatn}[2]{\fhat_{#1}(#2)}
\newcommand{\fhatt}[2]{\fhat_{#1,#2}}
\newcommand{\muhatn}[2]{\muhat_{#1}(#2)}
\newcommand{\muhatt}[2]{\muhat_{#1,#2}}
\newcommand{\fb}{\mathcal{A}}
\newcommand{\mf}{\mathcal{F}}
\newcommand{\mx}{\mathcal{X}}
\newcommand{\rd}{\mathrm{d}}
\newcommand{\mom}{\frac{1}{-\mu}}
\newcommand{\moms}{\mbox{$\mom$}}
\newcommand{\momss}{-1/\mu}
\newcommand{\unu}{\underline{\nu}}
\newcommand{\numin}{\frac{\mu-\E(F)}{-\mu(1+\mu)}}
\newcommand{\numins}{(\mu-\E(F))/(-\mu(1+\mu))}
\newcommand{\onu}{\overline{\nu}}
\newcommand{\supp}{\mathrm{supp}}
\newcommand{\suppp}[1]{\{0\}\cup\,\supp(#1)}
\newcommand{\suppF}[1]{\supp'(#1)}
\newcommand{\ff}[1]{\fb_{\suppF{#1}}}
\newcommand{\fs}[1]{\fb_{#1}}
\newcommand{\di}[1]{\dmin(F_{#1},\mu^*)}
\newtheorem{theorem}{Theorem}
\newtheorem{lemma}[theorem]{Lemma}
\newtheorem{corollary}{Corollary}
\title{An Asymptotically Optimal Policy for Finite Support Models 
in the Multiarmed Bandit Problem}
\author{Junya HONDA and Akimichi TAKEMURA \medskip \\
        Department of Mathematical Informatics\\
        Graduate School of Information Science and Technology\\
        The University of Tokyo\\
        \texttt{\normalsize \{honda,takemura\}@stat.t.u-tokyo.ac.jp}}
\date{
February, 2010
}
\begin{document}
\maketitle

\begin{abstract}
Multiarmed bandit problem is an example of a dilemma between exploration
 and exploitation in reinforcement learning.
This problem is expressed as a model of a gambler playing a slot machine
 with multiple arms.
A policy chooses an arm so as to minimize the number of times that arms
 with inferior expectations are pulled.
We propose minimum empirical divergence (MED) policy and
prove asymptotic optimality of the policy  
for the case of finite support models.
In a setting similar to ours, Burnetas and Katehakis
have already proposed an asymptotically optimal policy.
However we do not assume knowledge of the specific support except for
 the upper and lower bounds of the support.
Furthermore, the criterion for choosing an arm, minimum empirical
 divergence,
can be computed easily by a convex optimization technique.
We confirm by simulations that MED policy demonstrates good
performance in finite time in comparison to other currently popular
 policies.

\end{abstract}

\section{Introduction}
\label{section-intro}
The multiarmed bandit problem is a problem based on an analogy with
playing a slot machine with more than one arm or lever.  Each arm has a reward
distribution and the objective of a gambler is to maximize the
collected sum of rewards by choosing an arm to pull for each round.
There is a dilemma between exploration and exploitation, namely the
gambler can not tell whether an arm is optimal unless he pulls it
many times, but it is also a loss to pull an inferior (i.e.\ non-optimal) arm 
many times.

We consider an infinite-horizon $K$-armed bandit problem.
There are $K$ arms $\Pi_1,\\\dots,\Pi_K$ and arms are pulled infinite number of 
times.
$\Pi_j$ has a probability distribution $F_j$ with the expected value
$\mu_j$ and the player receives a reward according to $F_j$ independently in
each round. If the expected values are known, it is optimal to always
pull the arm
with the maximum expected value $\mu^*=\max_j \mu_j$.
A policy is an algorithm to choose the next arm  to pull
based on the results of past rounds.

This problem is first considered by Robbins \cite{robbins}.
Since then, many studies have been conducted for the problem
\cite{agrawal,even,meuleau,strens,vermorel,yakowitz}.
There are also many extensions for the problem.
For example, Auer et al. \cite{adversarial} removed the assumption
 that rewards are
stochastic, and for the stochastic setting, the 
case of non-stationary distributions \cite{gittins,ishikida,katehakis},
or the case of infinite (possibly uncountable) arms
\cite{conti1,conti2} have been considered.

In our setting, 
Lai and Robbins \cite{lai} established a theoretical framework for 
determining optimal policies, and 
Burnetas and Katehakis \cite{burnetas} extended  their result to
multiparameter or non-parametric models.
Consider a model $\mf$, a generic family of distributions.
The player knows $\mf$ and that $F_j$ is an element of $\mf$.
Let $T_j(n)$ denote the number of times that $\Pi_j$ has been pulled 
over the first $n$ rounds.
A policy is {\em consistent} on model $\mathcal{F}$ if 
$\E[T_i(n)]=\so(n^a)$ for all
inferior arms $\Pi_i$ and all $a>0$.

Burnetas and Katehakis \cite{burnetas} proved the following
 lower bound for any inferior
arm $\Pi_i$ under consistent policy:
\begin{eqnarray}
\label{eq:optimal-bound}
T_i(n) \ge \left(\frac{1}{\inf_{G\in\mathcal{F}:\E(G)>\mu^*}D(F_i||G)}+\so(1)\right)\log n
\end{eqnarray}
with probability tending to one,
where $\E(G)$ is the expected value of distribution $G$ and
 $D(\cdot||\cdot)$ denotes the Kullback-Leibler divergence.
Under mild regularity conditions on $\mf$,
\begin{eqnarray}
\inf_{G\in\mathcal{F}:\E(G)>\mu}D(F||G)=
\min_{G\in\mathcal{F}:\E(G)\ge\mu}D(F||G)\n
\end{eqnarray}
and we write 
\begin{eqnarray}
\dmin(F,\mu)=
\min_{G\in\mathcal{F}:\E(G)\ge\mu}D(F||G)\n
\end{eqnarray}
 in the following.

A policy is asymptotically optimal if the expected value of
$T_j(n)$ achieves the right-hand side of 
(\ref{eq:optimal-bound}) as $n\rightarrow\infty$.
In \cite{lai} and \cite{burnetas}, policies achieving the above
bound are also
proposed.
These policies are based on the notion of {\em upper confidence bound}.
It can be interpreted as the upper confidence limit 
for the expectation of each arm with the significance 
level $1/n$.

Although policies based on upper confidence bound are optimal,
upper confidence bounds are often hard to compute in practice.
Then, Auer et al. \cite{ucb} proposed some policies called UCB.
UCB policies estimate the expectation of each arm in a similar way to upper
confidence bound.
They are practical policies for their simple form and fine performance.
Especially, ``UCB-tuned'' is widely used because of its excellent simulation
results.
However, UCB-tuned has not been analyzed theoretically and it is unknown
whether the policy has consistency.
Theoretical analyses of other UCB policies have been given,
but their coefficients of the logarithmic term do
not necessarily achieve the bound \eqref{eq:optimal-bound}. 

In this paper
we propose minimum empirical divergence (MED) policy. 
We prove the asymptotic optimality of MED
when the model $\mf$ is the family of distributions with a finite bounded
support, denoted by $\fb$.
This model consists of all distributions with finite supports over a
given interval, e.g. $[-1,0]$.
It is larger than the model used in \cite{burnetas}, which assumes a
specific finite support.
We also demonstrate simulation results of MED policy comparable to
UCB policies.

Our MED policy is motivated by the observation of
\eqref{eq:optimal-bound}.
When a policy achieving \eqref{eq:optimal-bound} is used, an inferior arm
$\Pi_i$ waits roughly
 $\exp(n_i\dmin(F_i,\mu^*))$ rounds to be pulled after
the $n_i$-th play of $\Pi_i$.
Then, it can be expected that a policy pulling $\Pi_i$ with probability
$\exp(-n_i\dmin(F_i,\mu^*))$ will achieve \eqref{eq:optimal-bound}.
MED policy is obtained by
plugging $\hat{F}_i,\hat{\mu}^*$ into $F_i,\mu^*$ in $\dmin$,
where $\hat{F}_i$ is the
empirical distribution of rewards from $\Pi_i$ and $\hat{\mu}^*$
 is the current best
sample mean.

MED policy requires a computation of
$\dmin(\hat{F}_i,\hat{\mu}^*)=\min_{G\in\fb:\E(G)\ge\hat{\mu}^*}
D(\hat{F}_i||G)\n$ at each round 
whereas upper
confidence bound requires the computation of
\begin{eqnarray}
\max_{G\in\fb:\dmin(\hat{F}_i||G)\le
\frac{\log n}{n_i}}\E(G).\label{emax}
\end{eqnarray}
$\dmin$ and \eqref{emax} are quantity dual to each other but the former
has two advantages in practical implementation.
First, $\dmin(\hat{F}_i,\hat{\mu}^*)$ is smooth in $\hat{\mu}^*$
 which converges to $\mu^*$.
Therefore the value in the previous round can be used as a good
approximation of $\dmin$ for the current round.
On the other hand \eqref{emax} continues to increase according to $n$
and it has to be computed many times.
Second, as shown in Theorem \ref{thm8} below,
 $\dmin$ can be expressed as a {\it univariate} convex optimization
problem for our model $\fb$.
Although \eqref{emax} is also a convex optimization problem, the
nonlinear constraint $D(\hat{F}_i||G)\le \frac{\log n}{n_i}$ is
harder to handle.

MED policy is categorized as a probability matching method (see,
e.g. \cite{vermorel} for classification of policies).
In this method each arm is pulled according to the probability
reflecting how likely the arm is to be optimal.
For example, Wyatt \cite{wyatt} proposed probability matching policies for
Boolean and Gaussian models by Bayesian approach
with prior/posterior distributions.
In our approach the probability assigned to each arm is determined by
(normalized) maximum likelihood instead of posterior probability.

This paper is organized as follows.
In Section \ref{section-preliminary}, we give definitions used
throughout this paper and
show the asymptotic bound by \cite{burnetas}, which
is satisfied by any consistent policy.
In Section \ref{section-proposed-policy}, we propose
MED policy and prove that 
it is asymptotically optimal for finite support models.
We also discuss practical implementation issues of 
minimization problem involved in MED. 
In Section \ref{section-experiments}, some simulation results are shown.
We conclude the paper with some remarks in Section \ref{section-remarks}.

\section{Preliminaries}\label{section-preliminary}
In this section we introduce notation of this paper and present
the asymptotic bound 
for a generic model, which is established
 by \cite{burnetas}.

Let $\mf$ be a generic family of probability distributions on
 $\mathbb{R}$ and
let $F_j\in\mf$ be the distribution of $\Pi_j$, $j=1,\dots,K$.
$P_F[\cdot]$ and $\E_F[\cdot]$ denotes the probability and the
expectation under $F\in\mf$, respectively.
When we write e.g. $P_F[X\in A]\;\,(A\subset\mathbb{R})$ or
$\E_F[\theta(X)]\;\,(\theta(\cdot)\mbox{ is a function
}\mathbb{R}\to\mathbb{R})$,
$X$ denotes a random variable with distribution $F$.
We define $F(A)\equiv P_F[X\in A]$ and $\E(F)\equiv \E_F[X]$.

A set of probability distributions for $K$ arms is denoted by
$\bm{F}\equiv (F_1,\dots,F_K)\in \mathcal{F}^K\equiv\prod_{j=1}^K
\mathcal{F}$.
The joint probability and the expected value under 
$\bm{F}$ are  denoted by $P_{\bm F}[\cdot]$, $\E_{\bm F}[\cdot]$, respectively.

The expected value of $\Pi_j$ is denoted by $\mu_j\equiv\E(F_j)$.
We denote the optimal expected value by $\mu^*\equiv \max_{j}\mu_j$.
Let $\jn$ be the arm  chosen in the $n$-th round.
Then 
\begin{eqnarray}
T_j(n)=\sum_{m=1}^n \id[J_m=j], \n
\end{eqnarray}
where $\id[\cdot]$ denotes the indicator function.
For notational convenience we
write 
$T_j'(n)\equiv T_j(n-1),$
which is the number of times the arm $\Pi_j$ has been pulled prior to the
$n$-th round.

Let $\fhatt{j}{t}$ and $\muhatt{j}{t}\equiv \E(\fhatt{j}{t})$  be the
empirical distribution and the mean of the first $t$ rewards from
$\Pi_j$, respectively.
Similarly, let $\fhatn{j}{n}\equiv \fhatt{j}{T_j'(n)}$ and $\muhatn{j}{n}\equiv
\muhatt{j}{T_j'(n)}$ be the empirical distribution and mean of $\Pi_j$
after the first $n-1$ rounds, respectively.
$\hat{\mu}^*(n)\equiv \max_j\muhatn{j}{n}$ denotes the highest
empirical mean after $n-1$ rounds. We call $\Pi_j$ a current best if
$\muhatn{j}{n}=\hat\mu^*(n)$.

Let $\Omega$ denote the whole sample space.  
For an event $A\subset \Omega$, the complement of $A$ is denoted by $A^C$.  
The joint probability of two events $A$ and $B$
under $\bm F$ is written as $P_{\bm F}[A\scap B]$.  For notational
simplicity we often write, e.g.,  
$P_{\bm F}[ \jn = j \scap T_j'(n)=t]$ instead of the more
precise $P_{\bm F}[ \{\jn = j\} \cap \{T_j'(n)=t\}]$.

Finally we define an index for $F\in \mathcal{F}$ and $\mu\in \mathbb{R}$
\begin{eqnarray}
\dinf(F,\mu,\mf)\equiv \inf_{G\in\mathcal{F}:\E(G)>\mu}D(F||G)\n
\end{eqnarray}
where Kullback-Leibler divergence $D(F||G)$ is given by
\begin{eqnarray}
D(F||G)\equiv \begin{cases}
\E_F\left[\log
	       \frac{\mathrm{d}F}{\mathrm{d}G}\right]&\frac{\mathrm{d}F}{\mathrm{d}G}
\mbox{ exists,}\\
+\infty&\mbox{otherwise.}
\end{cases}\n
\end{eqnarray}
$\dinf$ represents how distinguishable $F$ is from
distributions having
expectations larger than $\mu$.
If $\{G\in\mathcal{F}:\E(G)>\mu\}$ is empty, we define
$\dinf(F,\mu,\mf)=+\infty$.
We adopt L\'evy distance
 $L(F,G)$
for distance between two distributions $F,G$.
We use only the fact that the convergence of the L\'evy distance
$L(F,F_n)\to 0$ is equivalent to the weak convergence of $\{F_n\}$ to
distribution $F$
and we write $F_n\to F$ in this sense.

Lai and Robbins \cite{lai} 
 gave  a lower bound for $\E[T_i(n)]$ for any inferior $\Pi_i$
 when a consistent policy is adopted.
However their result was hard to apply for multiparameter models 
and more general non-parametric models.  
Later Burnetas and Katehakis \cite[ahi]{burnetas} extended the
bound to general non-parametric models.  Their bound is given as follows.
\begin{theorem}{\rm \cite[Proposition 1]{burnetas}}
Fix a consistent policy and $\bm{F}\in \mathcal{F}^K$.
If $\E(F_i)<\mu^*$ and $0<\dinf(F_i,\mu^*,\mf)<\infty$,
 then for any $\epsilon>0$ 
\begin{eqnarray}
\lim_{N\to\infty} P_{\bm{F}}\left[T_i(N)\ge \frac{(1-\epsilon)\log
 N}{\dinf(F_i,\mu^*,\mf)}\right]=1.\n
\end{eqnarray}
Consequently
\begin{eqnarray}
\label{eq:liminf-ETj}
\liminf_{N\to\infty} \frac{\E_{\bm{F}}[T_i(N)]}{\log N}
\ge \frac{1}{\dinf(F_i,\mu^*,\mf)}.
\end{eqnarray}
\end{theorem}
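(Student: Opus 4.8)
The plan is to run the classical change-of-measure argument of Lai--Robbins in the non-parametric form of Burnetas--Katehakis. Abbreviate $C_i\equiv\dinf(F_i,\mu^*,\mf)$, which is finite and positive by hypothesis, and fix $\epsilon>0$. Since $C_i=\inf_{G\in\mf:\E(G)>\mu^*}D(F_i||G)<\infty$, I would first choose $G\in\mf$ with $\E(G)>\mu^*$ and with $D(F_i||G)$ close enough to $C_i$ that $(1-\epsilon)D(F_i||G)/C_i<1-\epsilon/2$; finiteness of $D(F_i||G)$ ensures $\rd F_i/\rd G$ exists. Then I form the modified bandit $\bm{F}'\equiv(F_1,\dots,F_{i-1},G,F_{i+1},\dots,F_K)$. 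Because $\E(G)>\mu^*\ge\mu_j$ for every $j\neq i$, arm $\Pi_i$ is the unique optimal arm under $\bm{F}'$, so consistency applied to $\bm{F}'$ yields $\E_{\bm{F}'}[N-T_i(N)]=\sum_{j\neq i}\E_{\bm{F}'}[T_j(N)]=\so(N^a)$ for every $a>0$.

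Next, writing $X_{i,1},X_{i,2},\dots$ for the i.i.d.\ ($\sim F_i$ under $\bm{F}$) rewards obtained at successive pulls of $\Pi_i$ and $\Lambda_m\equiv\sum_{s=1}^m\log\frac{\rd F_i}{\rd G}(X_{i,s})$, the likelihood ratio of $\bm{F}$ against $\bm{F}'$ restricted to the $\sigma$-field generated by the first $N$ rounds is $\exp(\Lambda_{T_i(N)})$, since the policy and the distributions of the other arms are unchanged. With $b_N\equiv\lceil(1-\epsilon)(\log N)/C_i\rceil$ and $c_N\equiv(1-\epsilon/2)\log N$ I would split
\[
P_{\bm{F}}[T_i(N)<b_N]\le P_{\bm{F}}[T_i(N)<b_N,\ \Lambda_{T_i(N)}\le c_N]+P_{\bm{F}}\Bigl[\max_{m\le b_N}\Lambda_m>c_N\Bigr].
\]
The first term, by the change of measure and $\exp(\Lambda_{T_i(N)})\le e^{c_N}$ on the event in question, is at most $e^{c_N}P_{\bm{F}'}[T_i(N)<b_N]\le N^{1-\epsilon/2}\,\E_{\bm{F}'}[N-T_i(N)]/(N-b_N)=\so(N^{a-\epsilon/2})\to0$ once $a<\epsilon/2$. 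For the second term, the strong law of large numbers gives $\Lambda_m/m\to D(F_i||G)$ a.s.\ under $F_i$, hence $\limsup_k k^{-1}\max_{m\le k}\Lambda_m\le D(F_i||G)$ a.s.; since $b_ND(F_i||G)<c_N$ for all large $N$ by the choice of $G$, one gets $\max_{m\le b_N}\Lambda_m<c_N$ eventually a.s., so this probability also vanishes. This gives $P_{\bm{F}}[T_i(N)\ge(1-\epsilon)(\log N)/C_i]\to1$, and letting $\epsilon\downarrow0$ proves the first assertion. The ``Consequently'' part is then immediate from $\E_{\bm{F}}[T_i(N)]\ge b_N\,P_{\bm{F}}[T_i(N)\ge b_N]$ by dividing by $\log N$, sending $N\to\infty$, and then $\epsilon\downarrow0$.

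I expect the main obstacle to be the joint handling of the change of measure and the randomness of $T_i(N)$: because $T_i(N)$ is correlated with the likelihood ratio, the event $\{T_i(N)<b_N\}$ cannot be transferred directly to the measure $\bm{F}'$, and the truncation at the intermediate level $c_N$ --- chosen strictly between $(1-\epsilon)D(F_i||G)(\log N)/C_i$ and $\log N$ --- is exactly what decouples the two contributions. Secondary points needing care are that the trajectory likelihood ratio is $P_{\bm{F}}$-a.s.\ well defined (ensured by $D(F_i||G)<\infty$) and that the increments $\log\frac{\rd F_i}{\rd G}(X_{i,s})$, having finite mean $D(F_i||G)$, obey the maximal form of the SLLN invoked above.
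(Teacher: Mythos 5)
The paper does not prove this theorem itself---it is quoted from \cite[Proposition 1]{burnetas}---and your change-of-measure argument is essentially the proof given there (and in Lai--Robbins): replace $F_i$ by a nearly-infimizing $G$ with $\E(G)>\mu^*$, apply consistency to the modified bandit, and decouple the random $T_i(N)$ from the likelihood ratio via the intermediate truncation level $c_N$ chosen strictly between $b_N D(F_i||G)$ and $\log N$. The argument is correct, including the two delicate points you flag explicitly: absolute integrability of the log-likelihood increments (guaranteed since $D(F_i||G)<\infty$ and the negative part always has finite mean), and the maximal form of the strong law needed to control $\max_{m\le b_N}\Lambda_m$.
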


\section{Asymptotically Optimal Policy for Finite Support
 Models}\label{section-proposed-policy}
Let $\fb\equiv\{F:|\supp(F)|<\infty,\,\supp(F)\subset [a,b]\}$ 
be the family of distributions
with a {\em finite} bounded support, where $\supp(F)$ is the support
of distribution $F$ and $a,b$ are constants known to the player.
We assume $a=-1,\,b=0$ without loss of generality.
We write $\suppF{F}\equiv\suppp{F}$ and
$\fs{\mx}\equiv\{G\in\fb:\supp(G)\subset\mx\}$ where $\mx$ is an
arbitrary subset of $[-1,0]$.

We consider $\fb$ as a model $\mf$ and propose a policy which we call
the minimum  empirical divergence (MED) policy in this section. 
We prove in Theorem \ref{optMD} that the proposed policy achieves the bound
given in the previous section.
Then, we describe a univariate convex optimization technique to compute
$\dmin$ used in the policy.

Note that the finiteness of the support
can not be determined from
finite samples and every policy for $\fb$ is applicable also for
$\{F:\supp(F)\subset [a,b]\}$.
However our proof of the optimality in this paper
 is for the above $\fb$.
The advantage of assuming the finiteness is that we can employ
the method of types in the large deviation technique.
This enables us to consider all empirical distributions obtained from
each arm.

In this model it is convenient to use
\begin{eqnarray}
\dmin(F,\mu,\fb)\equiv \min_{G\in\fb:\E(G)\ge\mu}D(F||G)\n
\end{eqnarray}
instead of $\dinf(F,\mu,\fb)\equiv \inf_{G\in\fb:\E(G)>\mu}D(F||G)$.
Properties of the minimizer $G^*$ of the right-hand side will be
discussed in Section \ref{section-dinf}. 
\begin{lemma}\label{dmindinf}
$\dmin(F,\mu,\fb)=\dinf(F,\mu,\fb)$ holds for all $F\in \fb$ and $\mu<0$.
\end{lemma}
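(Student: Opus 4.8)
The inequality $\dmin(F,\mu,\fb)\le\dinf(F,\mu,\fb)$ is immediate, since $\{G\in\fb:\E(G)>\mu\}\subset\{G\in\fb:\E(G)\ge\mu\}$ and the infimum over a smaller set is no smaller. So the substance of the lemma is the reverse inequality $\dinf(F,\mu,\fb)\le\dmin(F,\mu,\fb)$, and the plan is to show that every $G$ feasible for $\dmin$ can be approximated, in the sense of $D(F||\,\cdot\,)$, by distributions feasible for $\dinf$.

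Given $G\in\fb$ with $\E(G)\ge\mu$ and $D(F||G)<\infty$ (the case $D(F||G)=\infty$ being vacuous, as then $D(F||G)$ bounds nothing), I would take $G_\epsilon\equiv(1-\epsilon)G+\epsilon\delta_0$ for $\epsilon\in(0,1)$, where $\delta_0$ is the unit mass at $0$. Then $G_\epsilon\in\fb$: its support is $\suppp{G}\subset[-1,0]$ and still finite. The crucial point is that $\E(G_\epsilon)=(1-\epsilon)\E(G)>\mu$. Indeed, since $\supp(G)\subset[-1,0]$ we have $\E(G)\le 0$, and since $\mu<0$ we get $(1-\epsilon)\E(G)\ge\E(G)\ge\mu$, where the first inequality is strict unless $\E(G)=0$; in that exceptional case $G=G_\epsilon=\delta_0$ and $\E(G_\epsilon)=0>\mu$ anyway. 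Hence each $G_\epsilon$ is feasible for $\dinf$. It then remains to verify $D(F||G_\epsilon)\to D(F||G)$ as $\epsilon\to 0$: since $D(F||G)<\infty$ forces $F\ll G$, and $G_\epsilon(\{x\})\ge(1-\epsilon)G(\{x\})>0$ for every $x\in\supp(F)$, we have $F\ll G_\epsilon$, and $D(F||G_\epsilon)=\sum_{x\in\supp(F)}F(\{x\})\log\frac{F(\{x\})}{G_\epsilon(\{x\})}$ is a \emph{finite} sum each term of which converges to $F(\{x\})\log\frac{F(\{x\})}{G(\{x\})}$ because $G_\epsilon(\{x\})\to G(\{x\})>0$. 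Therefore $\dinf(F,\mu,\fb)\le\lim_{\epsilon\to 0}D(F||G_\epsilon)=D(F||G)$, and taking the infimum over all feasible $G$ gives $\dinf(F,\mu,\fb)\le\dmin(F,\mu,\fb)$.

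The only step requiring any care is the strict inequality $\E(G_\epsilon)>\mu$; this is precisely where the hypotheses $\mu<0$ and $\supp(F),\supp(G)\subset[-1,0]$ are used, and it is what makes the ``mixing toward $0$'' perturbation legitimate. Everything else — membership in $\fb$, absolute continuity $F\ll G_\epsilon$, and continuity of the finite Kullback--Leibler sum under the perturbation — is routine, owing to the finiteness of the supports. I do not expect the attainment of the minimum in the definition of $\dmin$ to matter here: the argument above uses $\dmin$ only as an infimum, and attainment, if wanted, can be read off from the analysis of the minimizer $G^*$ in Section~\ref{section-dinf}.
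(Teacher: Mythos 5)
Your proof is correct, but it takes a genuinely different route from the paper's. The paper disposes of this lemma in one line by appealing to Lemma~\ref{conti}: $\dmin(F,\mu)$ is monotone and continuous in $\mu<0$, so $\dinf(F,\mu)=\inf_{\mu'>\mu}\dmin(F,\mu')=\lim_{\mu'\downarrow\mu}\dmin(F,\mu')=\dmin(F,\mu)$; that continuity is in turn extracted from the dual representation $\dmin(F,\mu)=\max_{0\le\nu\le\moms}h(\nu)$ of Theorem~\ref{thm8}, so the paper's argument is short only because it leans on machinery developed later. You instead perturb the constraint set directly: mixing a feasible $G$ with $\delta_0$ pushes its mean strictly above $\mu$ --- your case analysis correctly isolates where $\mu<0$ and $\supp(G)\subset[-1,0]$ are used --- while $G_\ep(\{x\})\ge(1-\ep)G(\{x\})$ on $\supp(F)$ gives $D(F||G_\ep)\le D(F||G)-\log(1-\ep)\to D(F||G)$. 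This is more elementary and self-contained (it needs neither Theorem~\ref{thm8} nor Lemma~\ref{conti}), it is in the same spirit as the paper's own proof of Lemma~\ref{suppp}, which also pushes mass toward $\{0\}$, and it would apply verbatim to any bounded-support model closed under mixing with $\delta_0$, not just the finite-support family $\fb$. The only point worth stating explicitly is that $\{G\in\fb:\E(G)>\mu\}$ is nonempty for $\mu<0$ (it contains $\delta_0$), so $\dinf$ is not $+\infty$ by the emptiness convention; your construction supplies this for free. Your closing remark is also right that attainment of the minimum in $\dmin$ is irrelevant to the argument.
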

\begin{proof}
We will prove in Lemma \ref{conti} that $\dmin(F,\mu,\fb)$
 is continuous in $\mu<0$.
$\dmin(F,\mu,\allowbreak \fb)=\dinf(F,\mu,\fb)$ follows easily
 from the continuity.
\end{proof}

\subsection{Optimality of the Minimum Empirical Divergence Policy}
\label{subsec:med}

We now introduce our MED policy.  In MED
an arm is chosen randomly in the following way:

\begin{quote}
{\bf [Minimum Empirical Divergence Policy]}

{\bf Initialization.}\  Pull each arm  once.

{\bf Loop.} For the $n$-th round,
\begin{enumerate}
\item For each $j$ compute
$\hat{D}_j(n)\equiv \dmin(\hat{F}_j(n),\hat{\mu}^*(n),\fb) $.
\item Choose arm $\Pi_j$ according to the
      probability
\begin{eqnarray}
p_j(n)\equiv \frac{\exp(-T_j'(n)\hat{D}_j(n))}{\sum_{i=1}^K
 \exp(-T_i'(n)\hat{D}_i(n))}.
\n
\end{eqnarray}
\end{enumerate}
\end{quote}

Note that 
\begin{eqnarray}
\frac 1 K \le p_j(n)\le 1\label{bound_best}
\end{eqnarray}
for any currently best $\Pi_j$ since $\hat{D}_j(n)=0$. 
As a result, it holds for all $j$ that
\begin{eqnarray}
\frac 1 K \exp(-T_j'(n)\hat{D}_j(n))\le p_j(n)\le
      \exp(-T_j'(n)\hat{D}_j(n)).\label{bound_pj}
\end{eqnarray}

Intuitively, $p_j(n)$ for a currently not best arm $\Pi_j$ corresponds to
the maximum likelihood that $\Pi_j$ is actually the best arm.
Therefore in MED an arm $\Pi_j$
 is pulled with the probability proportional to this
likelihood.

Note that our policy is a randomized policy.  Therefore probability
statements below on MED also involve this randomization.  However for
notational simplicity we omit denoting this randomization.

Now we present the main theorem of this paper.

\begin{theorem}\label{optMD}
Fix $\bm{F}\in\fb^K$ satisfying $\mu_j=\mu^*$ and $\mu_i<\mu^*$
 for all $i\neq j$.
Under MED policy, for any $i\neq j$ and $\ep>0$ it holds that
\begin{eqnarray}
\E_{\bm{F}}[T_i(N)]\le \frac{1+\ep}{\dmin(F_i,\mu^*,\fb)}\log N
+\lo(1).\n
\end{eqnarray}
\end{theorem}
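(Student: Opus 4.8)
The plan is to bound $\E_{\bm F}[T_i(N)] = \sum_{n=1}^N P_{\bm F}[J_n = i]$ by decomposing according to the value $t = T_i'(n)$ of the number of prior pulls of $\Pi_i$. Using \eqref{bound_pj}, $P_{\bm F}[J_n = i \scap T_i'(n) = t] \le \E_{\bm F}[\exp(-t\,\hat D_i(n))\,\id[T_i'(n)=t]]$. The key quantity is $\hat D_i(n) = \dmin(\hat F_i(n),\hat\mu^*(n),\fb)$, which for a fixed empirical distribution $\fhatt{i}{t}$ concentrates around $\dmin(F_i,\mu^*,\fb)$ for large $t$, because $\fhatt{i}{t}\to F_i$ and $\hat\mu^*(n)\to\mu^*$. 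So, heuristically, once $t$ exceeds $(1+\ep)\log N/\dmin(F_i,\mu^*,\fb)$, the summand $\exp(-t\,\hat D_i(n))$ is of order $N^{-(1+\ep)}$ or smaller, and summing over $n \le N$ gives a vanishing contribution; the remaining $t \le (1+\ep)\log N/\dmin(F_i,\mu^*,\fb)$ contributes at most that many rounds, which is the claimed bound. I would first carry out this clean heuristic decomposition and isolate the ``bad events'' that must be controlled to make it rigorous.

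The two bad events to control are: (a) the empirical distribution $\fhatt{i}{t}$ of $\Pi_i$ is far from $F_i$, so that $\dmin(\fhatt{i}{t},\mu^*,\fb)$ is much smaller than $\dmin(F_i,\mu^*,\fb)$; and (b) the current best mean $\hat\mu^*(n)$ undershoots $\mu^*$, which would also depress $\hat D_i(n)$. For (a), since $\fb$ consists of finitely-supported distributions on $[-1,0]$, I would use the method of types: the number of distinct empirical distributions on $t$ samples, when the true support is finite (which it is, since $F_i \in \fb$), is polynomial in $t$, and the probability of each atypical type decays exponentially in $t$ at a rate governed by Sanov's theorem. This is exactly the ``advantage of assuming finiteness'' the authors flag. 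For (b), I would need a lower bound on how often the optimal arm $\Pi_j$ is pulled — since $\hat D_j(n) = 0$, \eqref{bound_best} gives $p_j(n) \ge 1/K$, so $T_j(n)$ grows linearly and $\muhatn{j}{n}\to\mu_j = \mu^*$ almost surely; the probability that $\hat\mu^*(n) < \mu^* - \de$ is therefore exponentially small in $n$, and these rounds contribute negligibly.

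Concretely I would (i) fix a small $\de>0$ and split by whether $\hat\mu^*(n)\ge\mu^*-\de$; (ii) on that event, use continuity of $\dmin$ in $\mu$ (Lemma~\ref{conti}, referenced in the proof of Lemma~\ref{dmindinf}) to get $\dmin(\fhatt{i}{t},\mu^*-\de,\fb) \ge (1-\ep')\dmin(F_i,\mu^*,\fb)$ whenever $L(\fhatt{i}{t},F_i)$ is small and $\de$ is small; (iii) bound the probability that $L(\fhatt{i}{t},F_i)$ is not small via the method of types, getting a bound like $\mathrm{poly}(t)\exp(-ct)$; (iv) assemble $\sum_n P_{\bm F}[J_n=i] \le \sum_{t \le (1+\ep)\log N/\dmin} 1 + \sum_n \sum_{t>(1+\ep)\log N/\dmin}(\text{small terms}) + (\text{bad-event terms})$, and check each tail sum is $\lo(1)$. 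The main obstacle I expect is step (iii) together with the bookkeeping in (iv): one must handle the fact that $\hat D_i(n)$ depends on $n$ only through $T_i'(n)$ and $\hat\mu^*(n)$, disentangle the randomness of the sample path of $\Pi_i$ from that of the other arms, and make the method-of-types decay rate uniform enough that, after multiplying by the $\mathrm{poly}(t)$ type count and summing over both $t$ and $n\le N$, everything beyond the leading $(1+\ep)\log N/\dmin(F_i,\mu^*,\fb)$ term collapses to $\lo(1)$. The continuity lemma and the linear growth of $T_j(n)$ are the two ``given'' ingredients that make this go through.
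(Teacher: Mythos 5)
Your overall architecture (main term of $(1+\ep)\log N/\dmin(F_i,\mu^*,\fb)$ rounds, plus bad events controlled by the method of types, Sanov's theorem, and continuity of $\dmin$) matches the paper's, and your treatment of bad event (a) is essentially the paper's Lemma \ref{lemnAB}. But there is a genuine gap in your bad event (b), and it is exactly the crux of the theorem. You write that since $\hat D_j(n)=0$ for the optimal arm, \eqref{bound_best} gives $p_j(n)\ge 1/K$, so $T_j(n)$ grows linearly and $P[\hat\mu^*(n)<\mu^*-\de]$ is exponentially small in $n$. This is false: $\hat D_j(n)=\dmin(\hat F_j(n),\hat\mu^*(n))=0$ only when $\Pi_j$ is the \emph{current best} arm, i.e.\ when $\muhatn{j}{n}=\hat\mu^*(n)$. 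Precisely on the event you are trying to control --- the optimal arm's empirical mean has undershot --- the optimal arm is \emph{not} the current best, $\hat D_j(n)>0$, and $p_j(n)$ can be as small as roughly $\frac1K\exp(-T_j'(n)\hat D_j(n))$. The optimal arm can therefore be starved for exponentially long stretches, its count does not grow linearly, and no per-round exponential-in-$n$ bound on $P[\hat\mu^*(n)<\mu^*-\de]$ is available.

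The paper's resolution (Lemma \ref{lemC}, the event $C_n$) is a delicate balance that your sketch does not contain: condition on the type $Q$ of the optimal arm's first $t$ rewards; while its count is stuck at $t$ and all empirical means are below $\mu_1-\de$, the optimal arm is pulled with probability at least $\frac1K\exp(-t\dmin(Q,\mu_1-\de))$ each round, so the expected number of such rounds is at most $K\exp(t\dmin(Q,\mu_1-\de))$; meanwhile the type bound gives $P_{F_1}[\fhatt{1}{t}=Q]\le\exp(-tD(Q\|F_1))\le\exp(-t\dmin(Q,\mu_1))$; and the uniform gap $\dmin(Q,\mu_1)-\dmin(Q,\mu_1-\de)\ge C(\mu_1,\mu_1-\de)>0$ of Lemma \ref{dinf} (proved via the derivative lower bound $\nu^*\ge\numins$ from Theorem \ref{thm8}(ii)) makes the product summable over $t$ after multiplying by the polynomial type count. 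Without this argument --- or some substitute establishing that the waiting time for the optimal arm to recover is beaten by the decay of the probability of a bad type --- the proof does not close. A secondary, more minor omission: you also need to handle the event that some \emph{inferior} arm's empirical mean overshoots $\mu_1-\de$ and becomes the current best (the paper's $D_n$); this one does yield to the $p\ge 1/K$ argument, because there the inflated arm really is the current best, so its count advances geometrically while Sanov bounds the probability of the inflation.
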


Note that we obtain
$$
\limsup_{N\to\infty}
\frac{\E_{\bm{F}}[T_i(N)]}{\log N} \le \frac{1}{\dmin(F_i,\mu^*,\fb)},
$$
by dividing both sides by $\log N$, letting $N\to\infty$ and finally letting
$\epsilon \downarrow 0$.  In view of (\ref{eq:liminf-ETj}) we see that
MED policy is asymptotically optimal.
We give a proof of Theorem \ref{optMD} in Section \ref{section-proofMD}.

The following corollary shows that the optimality of MED policy
 given in
Theorem \ref{optMD} is a generalization of the optimality in
\cite{burnetas}.
\begin{corollary}
Let $\mx\subset[-1,0]$ be an arbitrary subset of $[-1,0]$
such that $0\in\mathcal{X}$.
Fix $\bm{F}\in\fs{\mx}^K$ satisfying $\mu_j=\mu^*$ and $\mu_i<\mu^*$
 for all $i\neq j$.
Under MED policy, for any $i\neq j$ and $\ep>0$ it holds that
\begin{eqnarray}
\E_{\bm{F}}[T_i(N)]\le \frac{1+\ep}{\dmin(F_i,\mu,\fs{\mx})}\log N
+\lo(1).\label{coro}
\end{eqnarray}
\end{corollary}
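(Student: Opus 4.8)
The plan is to deduce the corollary directly from Theorem \ref{optMD} by comparing the divergence index across the two models $\fb$ and $\fs{\mx}$. Since $0\in\mx$ we have $\fs{\mx}\subseteq\fb$, hence $\bm{F}\in\fs{\mx}^K\subseteq\fb^K$. Because MED is defined solely through the index $\dmin(\cdot,\cdot,\fb)$ and never uses the support set $\mx$, the hypotheses of Theorem \ref{optMD} are met by $\bm{F}$, which gives
\begin{eqnarray}
\E_{\bm{F}}[T_i(N)]&\le& \frac{1+\ep}{\dmin(F_i,\mu^*,\fb)}\log N+\lo(1).\n
\end{eqnarray}
Reading the index in \eqref{coro} at $\mu^*$, it therefore remains only to prove the identity $\dmin(F_i,\mu^*,\fb)=\dmin(F_i,\mu^*,\fs{\mx})$.

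One inequality is immediate: the left-hand side minimizes $D(F_i||\cdot)$ over the larger feasible set $\fb\supseteq\fs{\mx}$, so $\dmin(F_i,\mu^*,\fb)\le\dmin(F_i,\mu^*,\fs{\mx})$. For the reverse I would invoke the structural description of the minimizer $G^*$ of $\min_{G\in\fb:\,\E(G)\ge\mu^*}D(F_i||G)$ that is developed in Section \ref{section-dinf}, namely that $G^*$ can be chosen with $\supp(G^*)\subseteq\suppp{F_i}$, equivalently $G^*\in\fs{\suppp{F_i}}$. The reason behind this fact is simply that any mass a feasible $G$ places at a point outside $\{0\}\cup\supp(F_i)$ may be shifted onto the endpoint $0$: this keeps $G$ in $\fb$, can only increase $\E(G)$ (so feasibility is preserved), and cannot increase $D(F_i||\cdot)$, since the summands over $\supp(F_i)$ are left unchanged or made smaller. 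Now $F_i\in\fs{\mx}$ and $0\in\mx$ give $\suppp{F_i}\subseteq\mx$, hence $\fs{\suppp{F_i}}\subseteq\fs{\mx}$ and in particular $G^*\in\fs{\mx}$. Therefore $\dmin(F_i,\mu^*,\fs{\mx})\le D(F_i||G^*)=\dmin(F_i,\mu^*,\fb)$, and the two inequalities yield the claimed identity. (All the divergences here are finite: concentrating enough mass at $0$ produces a $G\in\fs{\suppp{F_i}}\subseteq\fs{\mx}$ that is feasible and has $D(F_i||G)<\infty$, using $\mu^*\le 0$.)

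The genuine content is the support characterization of $G^*$, which is exactly the material postponed to Section \ref{section-dinf} and which also underlies the univariate convex reformulation of $\dmin$; everything else is just the squeezing $\fs{\suppp{F_i}}\subseteq\fs{\mx}\subseteq\fb$ of nested feasible sets. I expect that characterization, especially the claim that enlarging the support of $G$ beyond $\{0\}\cup\supp(F_i)$ never helps, to be the only point demanding real care.
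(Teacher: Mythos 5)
Your proof is correct and takes essentially the same route as the paper: you apply Theorem \ref{optMD} and then squeeze $\dmin$ between the nested feasible sets $\fs{\suppF{F_i}}\subseteq\fs{\mx}\subseteq\fb$, and your ``shift mass outside $\{0\}\cup\supp(F_i)$ onto the endpoint $0$'' argument is precisely the content of the paper's Lemma \ref{suppp}. No gaps.
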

\begin{proof}
We prove in Lemma \ref{suppp} that
$\dmin(F,\mu,\fb)=\dmin(F,\mu,\fs{\suppF{F}})$.
On the other hand,
$\dmin(F,\mu,\fs{\suppF{F}})\ge \dmin(F,\mu,\fs{\mx})$
 holds from $\fs{\suppF{F}} \subset \fs{\mx}$.
Then we obtain \eqref{coro} from Theorem \ref{optMD}.
\end{proof}
Note that \eqref{coro} is achieved also by the policy used
in the \cite{burnetas} if $\mathcal{X}$ is fixed and assumed to be
known.
Our result establishes the same bound
 without this assumption.
\subsection{Computation of $\dmin$ and Properties of the Minimizer}
\label{section-dinf}
For implementing MED policy it is essential to efficiently compute
the minimum empirical divergence
$\dmin(\hat{F}_j(n),\allowbreak\hat{\mu}^*(n),\fb)$
for each round. In this subsection, we clarify the nature of the convex
optimization
involved in $\dmin(\hat{F}_j(n),\allowbreak\hat{\mu}^*(n),\fb)$ 
and show how the minimization can be computed efficiently.
In addition, for proofs of Lemma \ref{dmindinf} and Theorem \ref{optMD}, we
need to clarify the behavior of $\dmin(F,\mu,\fb)$ as a function of $\mu$.

First we prove that it is sufficient to consider $\fs{\suppF{F}}$
for the computation of $\dmin(F,\mu,\allowbreak\fb)$:
\begin{lemma}\label{suppp}
$\dmin(F,\mu,\fb)=\dmin(F,\mu,\fs{\suppF{F}})$ holds for any $F\in \fb$.
\end{lemma}
\begin{proof}
Take an arbitrary $G\in\fb\setminus \fs{\suppF{F}}$
 such that $\E(G)\ge \mu$ and $G(\suppF{F})=p<1$.
Define $G'\in\ff{F}$ as
\begin{eqnarray}
G'(\{x\})\equiv \begin{cases}
G(\{0\})+(1-p)&x=0\\
G(\{x\})&x\neq 0,\,x\in\supp(F)\\
0&\mbox{otherwise}.
\end{cases}\n
\end{eqnarray}
Since $D(F||G')\le D(F||G)$ and $\E(G')\ge \E(G)$, we obtain
\begin{eqnarray}
\min_{G\in\fb:\E(G)\ge \mu}D(F||G)\ge
\min_{G'\in\ff{F}:\E(G')\ge \mu}D(F||G').\n
\end{eqnarray}
The converse inequality is obvious from $\ff{F}\subset \fb$.
\end{proof}
In view of this lemma, we simply write $\dmin(F,\mu)$
 instead of
 $\dmin(F,\mu,\fb)=\dmin(F,\allowbreak\mu,\ff{F})$
when the third argument is obvious from the context.

Let $M\equiv |\suppF{F}|$ and
 denote the finite symbols in $\suppF{F}$ by $x_1 \dots, x_M$, i.e.\ 
$\suppp{F}=\{x_1,\dots,\allowbreak x_M\}$.
We assume $x_1=0$ and $x_i<0$ for $i>1$ without loss of generality and
write $f_i\equiv F(\{x_i\})$.

Now the computation of $\dmin(F,\mu)$ is formulated as the following
convex optimization problem for $G=(g_1,\dots,g_M)$ from Lemma \ref{suppp}:
\begin{eqnarray}
\mathrm{minimize}&\quad&\sum_{i=1}^M f_i\log \frac{f_i}{g_i}\nn
\mathrm{subject\;to}&\quad&-g_i\le 0, \ \forall i, \quad
\mu-\sum_{i=1}^M x_i g_i\le 0,\quad \sum_{i=1}^M g_i=1,\label{problem}
\end{eqnarray}
where we define $0\log 0\equiv  0$, $0\log\frac 0 0\equiv  0$,
 and $\frac 1 0\equiv  +\infty$.

It is obvious that $G=F$ is the optimal solution with the optimal value
$0$ when $0\ge\E(F)\ge\mu$.
Also $G=\delta_0$, the unit point mass at 0, is
the unique feasible solution if $\mu=0$.
For $\mu>0$ the problem is infeasible.  
Since these cases are trivial, we
consider the case $\E(F)<\mu<0$ in the following.

Define $h(\nu)$ and its first and second order derivatives as
\begin{eqnarray}
h(\nu)&\equiv& \E_F[\log(1-(X-\mu)\nu)]=
\sum_{i=1}^M f_i\log(1-(x_i-\mu)\nu),
\label{h0}\\
h'(\nu)&\equiv&\hen{}{\nu}h(\nu)=
-\sum_{i=1}^{M}\frac{f_i (x_i-\mu)}{1-(x_i-\mu)\nu}
,\label{h1}\\
h''(\nu)&\equiv&\hen{^2}{\nu^2}h(\nu)=
-\sum_{i=1}^{M}\frac{f_i(x_i-\mu)^2}{(1-(x_i-\mu)\nu)^2}
.\label{h2}
\end{eqnarray}
Now we show in Theorem \ref{thm8} that the computation of $\dmin$ is
expressed as maximization of $h(\nu)$.
Since $h(\nu)$ is concave, it is a univariate convex
optimization problem.
Therefore $\dmin$ can be computed easily by iterative methods such as
Newton's method (see, e.g., \cite{boyd} for general methods of convex
programming).

\begin{theorem}\label{thm8}
Define $\E_F[\mu/X]=\infty$ for the case $F(\{0\})=f_1>0$.
Then following three properties hold for $\E(F)<\mu<0$:

{\rm (i)} $\dmin(F,\mu)$ is written as
\begin{eqnarray}
\dmin(F,\mu)&=&\max_{0\le\nu\le\mom}h(\nu)
\label{unif_opt}
\end{eqnarray}
and the optimal solution $\nu^*\equiv \argmax_{0\le\nu\le\mom}h(\nu)$
 is unique.

In particular for the case $\E[\mu/X]\le 1$, $\nu^*=\momss$ and
 \eqref{unif_opt} is simply written as
\begin{eqnarray}
\dmin(F,\mu)=h(\moms)=\sum_{i=2}^M f_i\log(x_i/\mu).\label{opt_le}
\end{eqnarray}

On the other hand for the case $\E[\mu/X]\ge 1$,
 \eqref{unif_opt} is written as an unconstrained optimization problem
\begin{eqnarray}
\dmin(F,\mu)=
\max_{\nu}h(\nu).
\label{opt_ge}
\end{eqnarray}

{\rm (ii)} $\nu^*$ satisfies
\begin{eqnarray}
 \nu^*\ge\frac{\mu-\E(F)}{-\mu(1+\mu)}.\n
\end{eqnarray}

{\rm (iii)} $\dmin(F,\mu)$ is differentiable in $\mu \in(\E(F),0)$ and
\begin{eqnarray}
\hen{}{\mu}\dmin(F,\mu)=\nu^*.
\n
\end{eqnarray}
\end{theorem}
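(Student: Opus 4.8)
The plan is to prove the identity \eqref{unif_opt} of part~(i) directly (weak duality for the convex program \eqref{problem}, together with an explicit feasible point attaining the bound), and then to read off parts~(ii) and~(iii) from the structure of the optimizer. Abbreviate $y_i\equiv x_i-\mu$ and $\phi(\nu)\equiv\sum_{i=1}^{M}f_i/(1-y_i\nu)$. From \eqref{h0} and \eqref{h1} one obtains the identities $\nu\,h'(\nu)=1-\phi(\nu)$ and $\frac{\partial}{\partial\mu}h(\nu)=\nu\,\phi(\nu)$ (the latter regarding $h$ as a function of $\mu$ through the $y_i$), together with $h'(0)=\mu-\E(F)>0$ and $h'(\mom)=-\mu\,(1-\E_F[\mu/X])$ (interpreted as $-\infty$ when $f_1>0$). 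By \eqref{h2} the map $h$ is strictly concave on $[0,\mom]$, so it has a \emph{unique} maximizer $\nu^*$, which is forced to be interior when $f_1>0$ because then $h(\nu)\to-\infty$ as $\nu\uparrow\mom$.

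\emph{Part (i).} For the inequality ``$\ge$'': for any $G$ feasible in \eqref{problem} and any $\nu\in[0,\mom)$, applying $-\log t\ge 1-t$ termwise to $D(F||G)-h(\nu)=\sum_i f_i\log\!\bigl(f_i/(g_i(1-y_i\nu))\bigr)$ and simplifying with $\sum_i g_i=1$ and $\sum_i x_i g_i\ge\mu$ yields $D(F||G)-h(\nu)\ge\nu(\E(G)-\mu)\ge0$; letting $\nu\uparrow\mom$ and taking the infimum over $G$ gives $\dmin(F,\mu)\ge\max_{0\le\nu\le\mom}h(\nu)$. For the matching ``$\le$'' I exhibit a feasible $G^*$ with $D(F||G^*)=h(\nu^*)$, splitting on the sign of $h'(\mom)$. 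If $\E_F[\mu/X]\le 1$ then $h'\ge0$ on $[0,\mom]$, so $\nu^*=\mom$; here $f_1=0$, and I take $g_i^*=\mu f_i/x_i$ for $i\ge2$ and $g_1^*=1-\E_F[\mu/X]\ge0$, the coordinate $g_1^*$ being free precisely because $f_1=0$. If $\E_F[\mu/X]>1$ then $h'(\mom)<0$, the maximizer is interior with $h'(\nu^*)=0$, and I take $g_i^*=f_i/(1-y_i\nu^*)$. In either case one checks that $g_i^*\ge0$, that $\sum_i g_i^*=1$ (in the interior case this is $\phi(\nu^*)=1$, from $\nu^* h'(\nu^*)=1-\phi(\nu^*)$), that $\E(G^*)=\mu$ so $G^*$ is feasible, and that $D(F||G^*)=h(\nu^*)$; the two cases then yield the explicit expressions \eqref{opt_le} and \eqref{opt_ge}. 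Together with the lower bound this proves \eqref{unif_opt}, and uniqueness of $\nu^*$ was noted above.

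\emph{Part (ii).} Write $v_0\equiv\numins$. If $\nu^*=\mom$, the claim $\nu^*\ge v_0$ becomes, after multiplying through by the positive numbers $-\mu$ and $1+\mu$, just $\E(F)\ge-1$. If $\nu^*<\mom$, I start from $h'(\nu^*)=0$, i.e.\ $\sum_i f_i\,\psi(y_i)=0$ with $\psi(y)\equiv y/(1-y\nu^*)$. On $[-1-\mu,-\mu]$, which contains every $y_i$ since $x_i\in[-1,0]$, the function $\psi$ is convex (its denominator is affine and positive there), so bounding $\psi$ above by its chord through the endpoints and using $\sum_i f_i y_i=\E(F)-\mu$ gives
\[
0\ \le\ -\E(F)\,\psi(-1-\mu)+(1+\E(F))\,\psi(-\mu).
\]
Substituting $\psi(-1-\mu)=-(1+\mu)/(1+(1+\mu)\nu^*)$ and $\psi(-\mu)=-\mu/(1+\mu\nu^*)$ and clearing the positive denominators, this reduces, after routine algebra, exactly to $\mu(1+\mu)\nu^*\le\E(F)-\mu$, i.e.\ $\nu^*\ge v_0$.

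\emph{Part (iii).} Differentiate \eqref{unif_opt} in $\mu$ on each of the two regimes and glue. On $\{\mu:\E_F[\mu/X]\ge1\}$ the optimum is unconstrained, $\nu^*(\mu)$ solves $h'(\nu;\mu)=0$ and is $C^1$ by the implicit function theorem (since $h''(\nu^*)<0$), and differentiating $\dmin(F,\mu)=h(\nu^*(\mu))$ using $h'(\nu^*)=0$, $\frac{\partial}{\partial\mu}h(\nu)=\nu\phi(\nu)$, and $\phi(\nu^*)=1$ gives $\frac{\partial}{\partial\mu}\dmin(F,\mu)=\nu^*$. On $\{\mu:\E_F[\mu/X]\le1\}$ one has $f_1=0$, so \eqref{opt_le} reads $\dmin(F,\mu)=\sum_{i\ge2}f_i\log(-x_i)-\log(-\mu)$ and $\frac{\partial}{\partial\mu}\dmin(F,\mu)=-1/\mu=\mom=\nu^*$. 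At the common boundary $\E_F[\mu/X]=1$ the interior critical point equals $\mom$, so $\nu^*(\mu)$---and hence the derivative---matches across it, giving differentiability on $(\E(F),0)$ with derivative $\nu^*$. The main obstacle is part~(ii): the bound $\nu^*\ge v_0$ runs ``against'' a naive Jensen estimate of $\phi$, and the device that works is the chord bound on $\psi$ over the full support range $[-1,0]$ of $X$; the remaining friction is the bookkeeping at $\nu=\mom$ and $f_1>0$ in part~(i).
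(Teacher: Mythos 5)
Your proposal is correct and follows essentially the same route as the paper: the same two-case split on the sign of $h'(\moms)$, the identical explicit optimizers $G^*$ in each case, the same chord/convexity bound on $(x-\mu)/(1-(x-\mu)\nu)$ for part (ii), and the same envelope computation for part (iii). The only cosmetic difference is that you establish strong duality by hand (the elementary inequality $-\log t\ge 1-t$ plus exhibiting the primal optimizer) and use the implicit function theorem where the paper cites Rockafellar's Kuhn--Tucker theorems and Fiacco's sensitivity result.
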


We give a proof of Theorem \ref{thm8} in Section \ref{section-proofMD}.
\subsection{Proofs of Theorem \ref{optMD} and \ref{thm8}}
\label{section-proofMD}
In this section we give proofs of Theorem \ref{optMD} and \ref{thm8}.
Actually we prove Theorem \ref{optMD} using Theorem \ref{thm8}
and prove Theorem \ref{thm8} independently of Theorem \ref{optMD}.

We first show Lemmas \ref{conti} and \ref{dinf} on properties of
 $\dmin$ to prove Theorem \ref{optMD}.

\begin{lemma}\label{conti}
$\dmin(F,\mu)$ is monotonically increasing in $\mu$ and possesses
 following continuities:
 (1) lower semicontinuous in $F\in\fb$, that is, 
 $\liminf_{F'\to F}\dmin(F',\mu)\ge \dmin(F,\mu)$.
 (2) continuous in $\mu<0$.
\end{lemma}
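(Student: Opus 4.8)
The plan is to handle the three assertions separately, with almost all of the work in the lower semicontinuity in $F$. Monotonicity in $\mu$ is immediate: if $\mu_1<\mu_2$ then $\{G\in\fb:\E(G)\ge\mu_2\}\subseteq\{G\in\fb:\E(G)\ge\mu_1\}$, so the minimum defining $\dmin(F,\mu_2)$ runs over a smaller set and is therefore no smaller than $\dmin(F,\mu_1)$.

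For part (1) the first step is to observe that enlarging the feasible class does not change the value: for $\mu<0$, $\dmin(F,\mu)=\min\{D(F||G):G\in\mathcal{K}_\mu\}$, where $\mathcal{K}_\mu$ is the set of \emph{all} probability measures on $[-1,0]$ with mean $\ge\mu$. Indeed, for $G\in\mathcal{K}_\mu$ with $D(F||G)<\infty$ (so $G$ charges every atom of $F$), relocating all $G$-mass lying outside $\supp(F)$ onto the point $0$ gives $G'\in\fs{\suppF{F}}$ with $\E(G')\ge\E(G)\ge\mu$ and $D(F||G')\le D(F||G)$ — exactly the mass-transfer argument of Lemma~\ref{suppp}. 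Now fix $\mu$, take $F_n\to F$ weakly in $\fb$, pick $G_n\in\mathcal{K}_\mu$ attaining $\dmin(F_n,\mu)$, and pass to a subsequence along which $\dmin(F_n,\mu)$ tends to $\ell:=\liminf_n\dmin(F_n,\mu)$; we may assume $\ell<\infty$. Since $[-1,0]$ is compact and the mean is weakly continuous on $\mathcal{P}([-1,0])$, $\mathcal{K}_\mu$ is weakly compact, so after a further subsequence $G_n\to G\in\mathcal{K}_\mu$. Relative entropy is jointly lower semicontinuous for weak convergence — a standard fact, e.g.\ from $D(F||G)=\sup_\phi\{\E_F[\phi]-\log\E_G[e^\phi]\}$ over bounded continuous $\phi$ — so $D(F||G)\le\liminf_n D(F_n||G_n)=\ell$, and since $G\in\mathcal{K}_\mu$ this yields $\dmin(F,\mu)\le D(F||G)\le\ell$, which is part (1).

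For part (2), first note $\dmin(F,\mu)=0$ for $\mu\le\E(F)$ (take $G=F$), so $\dmin(F,\cdot)$ is constant on $(-\infty,\E(F)]$; and the mixture $G=(1-\lambda)F+\lambda\delta_0$ with $\lambda=1-\mu/\E(F)$ shows $0\le\dmin(F,\mu)\le\log(\E(F)/\mu)$ for $\E(F)<\mu<0$, which gives finiteness of $\dmin(F,\mu)$ on that interval and right-continuity at $\mu=\E(F)$. On the open interval $(\E(F),0)$ one can simply quote the differentiability asserted in Theorem~\ref{thm8}(iii), or argue directly: by monotonicity the one-sided limits at each $\mu\in(\E(F),0)$ exist and sandwich $\dmin(F,\mu)$, and both turn out to equal it. For the left limit, take $\mu_k\uparrow\mu$ with minimizers $G_k\in\fs{\suppF{F}}$ (using Lemma~\ref{suppp}); since $\fs{\suppF{F}}$ is a compact simplex in $\mathbb{R}^M$, pass to $G_k\to G$, observe $\E(G)\ge\mu$, and use lower semicontinuity of $G\mapsto D(F||G)$ on the simplex to get $\dmin(F,\mu)\le D(F||G)\le\liminf_k\dmin(F,\mu_k)$. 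For the right limit, let $G$ be a minimizer for $\mu$: if $\E(G)>\mu$ then $G$ stays feasible for $\mu_k\downarrow\mu$ close to $\mu$, so $\dmin(F,\mu_k)\le D(F||G)=\dmin(F,\mu)$; if $\E(G)=\mu$, shift an amount $(\mu_k-\mu)/|x_M|\to0$ of mass from the atom at $x_M$ (which has positive $G$-mass because $D(F||G)<\infty$) onto $0$, producing feasible $G^{(k)}$ with $D(F||G^{(k)})\to D(F||G)$ and hence $\limsup_k\dmin(F,\mu_k)\le\dmin(F,\mu)$.

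The routine parts are monotonicity and continuity in $\mu$; the one genuinely delicate point is part (1), because the weak limit $G$ of the minimizers $G_n$ need not lie in $\fb$ (its support may be infinite) and the supports $\supp(F_n)$ need not be related to $\supp(F)$. The reduction to $\mathcal{K}_\mu$ above — which is where finiteness of $\supp(F)$ is used, via the Lemma~\ref{suppp} mass transfer — together with the joint lower semicontinuity of relative entropy, removes both difficulties; the step that most deserves care is checking that this reduction genuinely leaves $\dmin(F,\mu)$ unchanged.
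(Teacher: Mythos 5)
Your proof is correct, but it reaches the conclusion by a genuinely different route from the paper's, so a comparison is worth recording. For part (1) the paper works entirely on the dual side: it invokes the representation $\dmin(F,\mu)=\max_{0\le\nu\le\moms}\E_F[\log(1-(X-\mu)\nu)]$ from Theorem~\ref{thm8}(i), picks a near-optimal $\nu_0$ strictly below $\moms$ so that $x\mapsto\log(1-(x-\mu)\nu_0)$ is bounded and continuous on $[-1,0]$, and then lower semicontinuity in $F$ falls out of the definition of weak convergence in one line, since $\dmin(F',\mu)\ge\E_{F'}[\log(1-(X-\mu)\nu_0)]$ for every $F'$. You instead argue on the primal side: enlarge the feasible set to all probability measures on $[-1,0]$ with mean at least $\mu$ (justified by the same mass-transfer as Lemma~\ref{suppp}, which is where finiteness of $\supp(F)$ enters), extract a weakly convergent subsequence of minimizers by compactness of $\mathcal{P}([-1,0])$, and finish with the joint lower semicontinuity of relative entropy via the Donsker--Varadhan formula. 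Similarly for part (2) at the delicate endpoint $\mu=\E(F)$: the paper again uses the dual representation together with concavity of $h$ to get $0\le\dmin(F,\mu)\le\E(F)/\mu-1$, whereas you exhibit the explicit feasible mixture $(1-\lambda)F+\lambda\delta_0$ to get $\dmin(F,\mu)\le\log(\E(F)/\mu)$; these two bounds are essentially the same estimate obtained from the two sides of the duality. What the paper's route buys is brevity, given that Theorem~\ref{thm8} is proved anyway; what your route buys is independence from the Kuhn--Tucker analysis of Theorem~\ref{thm8} (your version of (1) and the right-continuity at $\E(F)$ use none of it, and your direct one-sided-limit argument on $(\E(F),0)$ can replace the appeal to Theorem~\ref{thm8}(iii) entirely), plus the fact that your argument for (1) extends verbatim beyond finite-support $F$ once the mass-transfer reduction is available, which is relevant to the continuous-support conjecture in the concluding remarks. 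The price is invoking heavier standard machinery (Prokhorov compactness and joint lower semicontinuity of $D(\cdot||\cdot)$) in place of a bounded-continuous test function.
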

Note that the continuity in $\mu<0$ is not trivial at $\mu=\E(F)$ because the
differentiability in Theorem \ref{thm8} is valid
only for the case $\E(F)<\mu<0$ and $\dmin(F,\mu)$ may not be
 differentiable at $\mu=\E(F)$.
\begin{proof}
The monotonicity is obvious from the definition of $\dmin$.

(1)
Fix an arbitrary $\ep>0$.
From \eqref{unif_opt} and the continuity of $h(\nu)$, 
there exists $\nu_0\in[0,\momss)$ such that
 $\E_F[\log(1-(X-\mu)\nu_0)]\ge \dmin(F,\mu)-\ep$.
Then we obtain
\begin{eqnarray}
\liminf_{F'\to F}\dmin(F',\mu)&\ge&
\liminf_{F'\to F} \E_{F'}[\log(1-(X-\mu)\nu_0)]\nn
&=&\E_F[\log(1-(X-\mu)\nu_0)]\label{levy_conv}\\
&\ge&\dmin(F,\mu)-\ep.\n
\end{eqnarray}
Note that $\log(1-(x-\mu)\nu_0)$ is continuous
 and bounded in $x\in[-1,0]$ and
\eqref{levy_conv} follows from the definition of weak convergence.
The lower semicontinuity holds since $\ep$ is arbitrary.

(2) The continuity is obvious for $\mu>\E(F)$ from the differentiability in
 Theorem \ref{thm8}.
The case $\mu<\E(F)$ is also obvious since $\dmin(F,\mu)=0$ holds for $\mu\le
 \E(F)$.
Then it is sufficient to show
\begin{eqnarray}
\lim_{\mu\downarrow
 \E(F)}\dmin(F,\mu)=\dmin(F,\E(F))=0.\label{conti_mu}
\end{eqnarray}
From \eqref{unif_opt} and the concavity of $h(\nu)$, it holds that
\begin{eqnarray}
&&h(0)\le \dmin(F,\mu)\le h(0)+h'(0)\moms\nn
 &\Leftrightarrow&\quad\:\, 0\le \dmin(F,\mu)\le
 \mbox{$\frac{\E(F)}{\mu}$}-1
\n
\end{eqnarray}
for $\mu>\E(F)$.
\eqref{conti_mu} is obtained by letting $\mu\downarrow \E(F)$.
\end{proof}

\begin{lemma}\label{dinf}
Fix arbitrary $\mu,\mu'\in(-1,0)$ satisfying $\mu'<\mu$.
Then there exists $C(\mu,\mu')>0$ such that
\begin{eqnarray}
\dmin(F,\mu)-\dmin(F,\mu')
&\ge&
C(\mu,\mu').\n
\end{eqnarray}
for all $F\in \fb$ satisfying $\E(F)<\mu'$.
\end{lemma}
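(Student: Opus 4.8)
The plan is to exploit the integral representation of the difference via part (iii) of Theorem~\ref{thm8} and then bound the integrand (the optimal dual variable $\nu^*$) from below uniformly in $F$. Concretely, since $\E(F)<\mu'<\mu<0$, Theorem~\ref{thm8}(iii) gives that $\dmin(F,\cdot)$ is differentiable on $(\E(F),0)\supset[\mu',\mu]$ with derivative $\nu^*(F,t)=\argmax_{0\le\nu\le -1/t}h(\nu)$, so
\begin{eqnarray}
\dmin(F,\mu)-\dmin(F,\mu')=\int_{\mu'}^{\mu}\nu^*(F,t)\,\rd t.\n
\end{eqnarray}
Hence it suffices to produce a constant $c(\mu')>0$ with $\nu^*(F,t)\ge c(\mu')$ for all $t\in[\mu',\mu]$ and all $F\in\fb$ with $\E(F)<\mu'$; then $C(\mu,\mu')=c(\mu')(\mu-\mu')$ works.

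For the lower bound on $\nu^*$ I would use Theorem~\ref{thm8}(ii), which states $\nu^*(F,t)\ge (t-\E(F))/(-t(1+t))$. For $t\in[\mu',\mu]\subset(-1,0)$ the denominator $-t(1+t)$ is bounded above by some constant depending only on $\mu,\mu'$, and since $\E(F)<\mu'\le t$ we have $t-\E(F)> t-\mu'\ge 0$; but that lower bound degenerates as $t\downarrow\mu'$ with $\E(F)\uparrow\mu'$. The fix is that we only need the estimate to hold on the subinterval near $\mu$, or more simply: for $t$ bounded away from $\E(F)$ the bound is fine, and for $t$ near $\E(F)$ we instead use the monotonicity from Lemma~\ref{conti} together with continuity. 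Cleanest is to split: pick the midpoint, say work on $[\tfrac{\mu'+\mu}{2},\mu]$. There $t-\E(F)\ge t-\mu'\ge \tfrac{\mu-\mu'}{2}>0$, and $-t(1+t)\le \tfrac14$ (since $-t(1+t)\le 1/4$ for all real $t$), so $\nu^*(F,t)\ge 2(\mu-\mu')>0$ uniformly. Then by monotonicity of $\dmin$ in $\mu$ (Lemma~\ref{conti}),
\begin{eqnarray}
\dmin(F,\mu)-\dmin(F,\mu')\ge \dmin(F,\mu)-\dmin(F,\tfrac{\mu'+\mu}{2})
=\int_{(\mu'+\mu)/2}^{\mu}\nu^*(F,t)\,\rd t\ge 2(\mu-\mu')\cdot\tfrac{\mu-\mu'}{2},\n
\end{eqnarray}
which is a positive constant $C(\mu,\mu')$ depending only on $\mu,\mu'$.

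The main obstacle is the uniformity over $F$: a priori $\E(F)$ can be arbitrarily close to $\mu'$, which is exactly why the naive bound from Theorem~\ref{thm8}(ii) fails over the full interval $[\mu',\mu]$ and why one must restrict to an interval bounded away from $\mu'$ and invoke monotonicity to discard the lost piece. One should also double-check the edge cases excluded from Theorem~\ref{thm8} — but here $\E(F)<\mu'<t<0$ holds throughout the interval of integration, so part (iii) applies at every point and no boundary subtlety arises. A final small point: verifying $-t(1+t)\le 1/4$ for $t\in(-1,0)$ (indeed for all $t\in\mathbb{R}$) is elementary, being the maximum of the downward parabola at $t=-1/2$.
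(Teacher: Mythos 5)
Your proof is correct and follows essentially the same route as the paper: both use the integral representation from Theorem~\ref{thm8}(iii) together with the lower bound on $\nu^*$ from Theorem~\ref{thm8}(ii). Your midpoint split is an unnecessary (though harmless) precaution — the bound $\nu^*\ge (u-\E(F))/(-u(1+u))\ge (u-\mu')/(-\mu'(1+\mu))$ does not degenerate in any harmful way, since integrating $u-\mu'$ over the full interval $[\mu',\mu]$ still yields the strictly positive constant $(\mu-\mu')^2/(-2\mu'(1+\mu))$, which is exactly what the paper does.
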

\begin{proof}
Since $\dmin(F,\mu)$ is differentiable in $\mu>\E(F)$
 from Theorem \ref{thm8},
we have
\begin{eqnarray}
\dmin(F,\mu)-\dmin(F,\mu')
&=&\int_{\mu'}^{\mu}\hen{}{u}\dmin(F,u)\rd
			       u\nn
&\ge&\int_{\mu'}^{\mu}
\frac{u-\mu'}{-u(1+u)} \rd u\nn
&\ge&
\int_{\mu'}^{\mu}
\frac{u-\mu'}{-\mu'(1+\mu)} \rd u\nn
&=&
\frac{(\mu-\mu')^2}{-2\mu'(1+\mu)}
\;\:\big(=:C(\mu,\mu')\big).\n
\end{eqnarray}
\end{proof}

\begin{proof}[Proof of Theorem \ref{optMD}]
We define more notation used in the following proof.
We fix $j=1$ and let $L\equiv \{2,\dots,K\}$.
Then, $\mu^*=\mu_1$ and $\mu_k<\mu_1$ for $k\in L$.
For notational convenience  we denote
$J_n(i)\equiv\{\jn=i\}$
which is the event that the arm $\Pi_i$ is
 pulled at the $n$-th round.

We  simply write $\E[\cdot], P[\cdot]$ as an expectation and a probability 
under $\bm{F}$ and the randomization in the policy. 
Now we define events $A_n,\,B_n,\,C_n,\,D_n$ as follows:
\begin{eqnarray*}
A_n&\equiv& \left\{ \hat{D}_i(n)\ge \frac{\di{i}}{1+\ep/2} \right\} \\
B_n&\equiv&\{ \hat{\mu}_1(n)\ge \mu_1-\de \}\\
C_n&\equiv&\{ \hat{\mu}_1(n)<\mu_1-\de \scap \max_{k\in L}\hat{\mu}_k(n)<\mu_1-\de \}\\
D_n&\equiv&\{ \hat{\mu}_1(n)<\mu_1-\de \scap \max_{k\in L}\hat{\mu}_k(n)\ge\mu_1-\de\}
\end{eqnarray*}
where $\de>0$ is a constant satisfying
 $\max_{k\in L}\mu_k < \mu_1-\de$
 which is set
sufficiently small in the evaluation on $B_n$.
Note that
$B_n \cup C_n \cup D_n = \Omega$
and 
each $\id[J_n(i)]$ in the sum
$T_i(N)=\sum_{n=1}^N \id[J_n(i)]$ 
is bounded from above by
\begin{eqnarray}
\id[J_n(i)]&\le& 
\id[J_n(i)\cap A_n]
+ \id[J_n(i)\cap C_n]
+ \id[J_n(i)\cap A^C_n \cap B_n]
+ \id[J_n(i)\scap D_n].\quad
\label{eq:five-terms}
\end{eqnarray}

In the following Lemmas \ref{lemA}-\ref{lemD} we  bound the expected values of sums of the four
terms on the right-hand side of (\ref{eq:five-terms})
in this order
 and they are sufficient to prove Theorem \ref{optMD}.
\end{proof}

\begin{lemma}\label{lemA}
Fix an arbitrary $\ep>0$.
Then it holds that
\begin{eqnarray}
\E\left[\sum_{n=1}^N \id[J_n(i)\cap A_n]\right]
\le \frac{1+\ep}{\dmin(F_i,\mu^*)}\log N
+\so(1).\n
\end{eqnarray}
\end{lemma}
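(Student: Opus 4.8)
The plan is to exploit the basic inequality \eqref{bound_pj}: on $A_n$ the empirical divergence of the inferior arm satisfies $\hat D_i(n)\ge\di{i}/(1+\ep/2)$, so the chance of pulling $\Pi_i$ at round $n$ is at most $\exp(-T_i'(n)\,\di{i}/(1+\ep/2))$, which is genuinely small once $T_i'(n)$ exceeds a constant multiple of $\log N$. (If $\di{i}=+\infty$, possible under the hypotheses only when $\mu^*=0$, the claim is trivial: $A_n$ forces $\hat D_i(n)=+\infty$, hence $p_i(n)=0$ whenever $A_n$ holds, so $\Pi_i$ is never chosen on $A_n$; assume henceforth $0<\di{i}<\infty$, a fixed positive constant.) Accordingly I would fix a threshold $t_0=t_0(N)$ and split
\[
\sum_{n=1}^N \id[J_n(i)\cap A_n]\ \le\ t_0\ +\ \sum_{n=1}^N \id\bigl[J_n(i)\cap A_n\cap\{T_i'(n)\ge t_0\}\bigr];
\]
the term $t_0$ bounds the number of pulls of $\Pi_i$ with $T_i'(n)<t_0$ (the values $0,1,\dots,t_0-1$ of $T_i'(n)$ each occur at most once at a pull of $\Pi_i$), and the second sum effectively runs over the loop rounds $n\ge K+1$ only, since during the initialization $T_i'(n)=0<t_0$ (once $t_0\ge1$).

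For the second sum I would pass to expectations and, for each fixed $n\ge K+1$, condition on the history preceding round $n$. That history is measurable for $T_i'(n)$, $\hat F_i(n)$, $\hat\mu^*(n)$, hence for $\hat D_i(n)$ and for the indicators of $A_n$ and $\{T_i'(n)\ge t_0\}$, while the conditional probability of $J_n(i)$ equals $p_i(n)$. On $A_n\cap\{T_i'(n)\ge t_0\}$ we have $T_i'(n)\hat D_i(n)\ge t_0\,\di{i}/(1+\ep/2)$ (using $\hat D_i(n)\ge0$), so by \eqref{bound_pj}
\[
P\bigl[J_n(i)\cap A_n\cap\{T_i'(n)\ge t_0\}\bigr]
=\E\bigl[\id[A_n\cap\{T_i'(n)\ge t_0\}]\,p_i(n)\bigr]
\le \exp\!\Bigl(-\tfrac{t_0\,\di{i}}{1+\ep/2}\Bigr),
\]
and summing the $N$ such terms bounds the expectation of the second sum by $N\exp(-t_0\,\di{i}/(1+\ep/2))$.

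It remains to calibrate $t_0$, which I expect to be the only delicate step: this is exactly where the gap between the $1+\ep/2$ in the definition of $A_n$ and the $1+\ep$ in the conclusion is consumed. One needs $t_0$ strictly above $(1+\ep/2)(\log N)/\di{i}$, so that the crude factor $N$ coming from summing $N$ round-by-round probabilities is defeated, yet at most $(1+\ep)(\log N)/\di{i}$, so that the bound $t_0$ on the number of low-count pulls stays within the target logarithmic term. Concretely I would take $t_0=\lceil(1+3\ep/4)(\log N)/\di{i}\rceil$: then $t_0\,\di{i}/(1+\ep/2)\ge\beta\log N$ with $\beta=(1+3\ep/4)/(1+\ep/2)>1$, so the tail bound is at most $N^{1-\beta}=\so(1)$, while $t_0\le(1+3\ep/4)(\log N)/\di{i}+1\le(1+\ep)(\log N)/\di{i}$ for all large $N$. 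Combining the two displays gives $\E[\sum_{n=1}^N\id[J_n(i)\cap A_n]]\le(1+\ep)(\log N)/\di{i}+\so(1)$ for large $N$, and the finitely many remaining $N$ contribute only lower-order terms (there the left-hand side is at most $N$, a bounded quantity, while $(1+\ep)(\log N)/\di{i}\ge0$). Apart from this calibration, the argument is the routine conditioning computation for randomized index policies.
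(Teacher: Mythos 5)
Your proof is correct and follows essentially the same route as the paper's: split the pulls of $\Pi_i$ on $A_n$ according to whether the pull count exceeds a logarithmic threshold, bound the low-count contribution by the threshold itself, and bound the expected high-count contribution by summing the conditional probability bound $p_i(n)\le\exp(-T_i'(n)\hat D_i(n))$ from \eqref{bound_pj} over the $N$ rounds, with the slack between $1+\ep/2$ and the threshold yielding an exponent below $-1$ and hence an $\so(1)$ tail. The only cosmetic differences are that the paper uses the threshold $(1+\ep)\log N/\di{i}$ directly (counting prior occurrences of $J_m(i)\cap A_m$ rather than $T_i'(n)$), whereas you take the slightly smaller $(1+3\ep/4)\log N/\di{i}$ and absorb the gap for large $N$; your explicit conditioning on the history and your treatment of the degenerate case $\di{i}=\infty$ are, if anything, more careful than the paper's.
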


\begin{lemma}\label{lemC}
\begin{eqnarray}
\E\left[\sum_{n=1}^N \id[J_n(i)\cap C_n]\right]=\lo(1).\n
\end{eqnarray}
\end{lemma}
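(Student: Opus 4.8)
The plan is to prove the stronger statement $\E\big[\sum_{n=1}^{N}\id[C_n]\big]=\lo(1)$ (note $\id[J_n(i)\cap C_n]\le\id[C_n]$), using of the definition of $C_n$ only that on $C_n$ both $\muhatn{1}{n}<\mu_1-\de$ and $\hat\mu^*(n)<\mu_1-\de$. I would group the rounds according to $s\equiv T_1'(n)$ and write $\sigma_s$ for the round of the $s$-th pull of the optimal arm $\Pi_1$. Throughout the block $\{\sigma_s+1,\dots,\sigma_{s+1}\}$ the empirical distribution $\fhatn{1}{n}$ equals the frozen type $\fhatt{1}{s}$, so $C_n$ can occur in this block only when $\muhatt{1}{s}<\mu_1-\de$; on such a round, combining $\hat\mu^*(n)<\mu_1-\de$ with the monotonicity of $\dmin$ in its second argument (Lemma \ref{conti}) and the lower bound in \eqref{bound_pj} would give
\[
p_1(n)\ \ge\ \frac{1}{K}\exp\!\big(-s\,\hat D_1(n)\big)\ \ge\ \frac{1}{K}\exp\!\big(-s\,\dmin(\fhatt{1}{s},\mu_1-\de)\big),
\]
a fixed positive number once $\fhatt{1}{s}$ is given (and $\dmin(\fhatt{1}{s},\mu_1-\de)<\infty$ because $\mu_1-\de\in(-1,0)$).

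Next, since pulling $\Pi_1$ terminates the block, I would argue that conditionally on the history up to round $\sigma_s$ the number of rounds of the block on which $C_n$ holds is stochastically dominated by a geometric random variable with the success probability displayed above; hence its conditional expectation given $\fhatt{1}{s}$ is at most $K\exp\!\big(s\,\dmin(\fhatt{1}{s},\mu_1-\de)\big)$ on $\{\muhatt{1}{s}<\mu_1-\de\}$ and $0$ otherwise. Averaging over $\fhatt{1}{s}$ by the method of types — it takes at most $(s+1)^{|\supp(F_1)|}$ values and $P[\fhatt{1}{s}=\hat f]\le\exp(-s\,D(\hat f\|F_1))$ — would yield
\[
\E\!\left[\sum_{n=\sigma_s+1}^{\sigma_{s+1}}\id[C_n]\right]\ \le\ K\sum_{\hat f:\ \E(\hat f)<\mu_1-\de}\exp\!\Big(-s\big(D(\hat f\|F_1)-\dmin(\hat f,\mu_1-\de)\big)\Big).
\]

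The crucial step is to show that this exponent has a uniform positive lower bound. I would note that $F_1$ is a feasible point of the minimization defining $\dmin(\hat f,\mu_1)$ (its mean equals $\mu_1$), so $D(\hat f\|F_1)\ge\dmin(\hat f,\mu_1)$, and then invoke Lemma \ref{dinf} with $\mu=\mu_1$ and $\mu'=\mu_1-\de$ to get $\dmin(\hat f,\mu_1)-\dmin(\hat f,\mu_1-\de)\ge C(\mu_1,\mu_1-\de)>0$ for every $\hat f$ with $\E(\hat f)<\mu_1-\de$ (here $\de$ is small enough that $\mu_1-\de\in(-1,0)$). Writing $c\equiv C(\mu_1,\mu_1-\de)$, the previous display is then at most $K(s+1)^{|\supp(F_1)|}e^{-cs}$, and summing over $s$ (the finitely many initial rounds with $T_1'(n)=0$ contribute at most a constant because of the initialization step) gives $\E\big[\sum_{n=1}^{\infty}\id[C_n]\big]\le K\sum_{s\ge0}(s+1)^{|\supp(F_1)|}e^{-cs}<\infty$, which is $\lo(1)$, completing the argument.

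I expect the main obstacle to be the block step: because $\hat D_1(n)$ depends on the whole frozen type $\fhatt{1}{s}$ and not merely on its mean $\muhatt{1}{s}$, one is forced to carry $\fhatt{1}{s}$ through the computation (this is exactly where the method of types enters), and one needs the geometric rate $\tfrac{1}{K}e^{-s\dmin(\fhatt{1}{s},\mu_1-\de)}$ to be dominated by the type probability $e^{-sD(\fhatt{1}{s}\|F_1)}$ — which is precisely the inequality furnished by Lemma \ref{dinf}. A secondary technical point will be to make the stochastic-domination bound rigorous, via the strong Markov property of the randomized policy at the stopping time $\sigma_s$.
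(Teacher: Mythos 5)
Your proposal is correct and follows essentially the same route as the paper's proof: decompose by the value of $T_1'(n)$ and the frozen type of $\Pi_1$, bound the number of offending rounds per block by a geometric variable with success probability $\tfrac1K e^{-t\dmin(Q,\mu_1-\de)}$ via \eqref{bound_pj}, combine with the method-of-types bound $P_{F_1}[\fhatt{1}{t}=Q]\le e^{-tD(Q\|F_1)}\le e^{-t\dmin(Q,\mu_1)}$, and conclude with the uniform gap from Lemma \ref{dinf} and the polynomial count of types. The stochastic-domination step you flag as the main technical point is exactly what the paper makes rigorous with its explicit construction of the stopping times $\bm{R}=(R_1,\dots,R_m)$.
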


\begin{lemma}\label{lemnAB}
\begin{eqnarray}
\E\left[\sum_{n=1}^N \id[J_n(i)\cap A^C_n \cap B_n]\right]=\lo(1).\n
\end{eqnarray}
\end{lemma}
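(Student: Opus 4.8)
The plan is to show that the event $J_n(i)\cap A_n^C\cap B_n$ can occur only when the empirical distribution $\fhatn{i}{n}$ of $\Pi_i$ lies at least a \emph{fixed} L\'evy distance $\eta>0$ away from $F_i$, and then to bound the number of such rounds by a method-of-types estimate that is summable in the number $T_i'(n)$ of pulls of $\Pi_i$. Note that the pull probability $p_i(n)$ plays no role here; the whole bound comes from a large deviation estimate for the empirical distribution of $\Pi_i$, which does not depend on the policy.

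First I would eliminate the data-dependent mean $\hat\mu^*(n)$ from $\hat D_i(n)$. On $B_n$ we have $\hat\mu^*(n)\ge\hat\mu_1(n)\ge\mu_1-\de$, and $\de$ has been chosen with $\mu_i=\E(F_i)<\mu_1-\de<0$ (recall $i\in L$). Hence, by the monotonicity of $\dmin(F,\cdot)$ in Lemma \ref{conti},
\[
\hat D_i(n)=\dmin(\fhatn{i}{n},\hat\mu^*(n))\ \ge\ \dmin(\fhatn{i}{n},\mu_1-\de),
\]
so $A_n^C\cap B_n$ forces $\dmin(\fhatn{i}{n},\mu_1-\de)<\dmin(F_i,\mu^*)/(1+\ep/2)$, now with a \emph{fixed} second argument. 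Using the continuity of $\dmin(F_i,\cdot)$ in $\mu<0$ (Lemma \ref{conti}(2)) together with $0<\dmin(F_i,\mu^*)<\infty$, I would take $\de$ small enough that $\dmin(F_i,\mu_1-\de)>\dmin(F_i,\mu^*)/(1+\ep/2)$; this is exactly the smallness condition on $\de$ anticipated in the definition of $B_n$. Then the lower semicontinuity of $\dmin(\cdot,\mu_1-\de)$ at $F_i$ (Lemma \ref{conti}(1)) yields a constant $\eta>0$, depending only on $F_i$, $\ep$ and $\de$ and not on $n$, such that $L(F',F_i)<\eta$ implies $\dmin(F',\mu_1-\de)>\dmin(F_i,\mu^*)/(1+\ep/2)$. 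Consequently $A_n^C\cap B_n\subset\{L(\fhatn{i}{n},F_i)\ge\eta\}$ for every $n$.

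Next I would pass from rounds to pulls. Whenever $\jn=i$ the empirical distribution used is $\fhatn{i}{n}=\fhatt{i}{T_i'(n)}$, and the values of $T_i'(n)$ taken at the successive rounds in which $\Pi_i$ is pulled are all distinct (each pull of $\Pi_i$ increments $T_i$). Hence
\[
\sum_{n=1}^N\id[J_n(i)\cap A_n^C\cap B_n]\ \le\ \sum_{t\ge1}\id\big[L(\fhatt{i}{t},F_i)\ge\eta\big],
\]
and after taking expectations it remains to show $\sum_{t\ge1}P_{F_i}[L(\fhatt{i}{t},F_i)\ge\eta]<\infty$. This is the step that uses the finiteness of $\supp(F_i)$: $\fhatt{i}{t}$ is a type over the finite alphabet $\supp(F_i)$, there are at most $(t+1)^{|\supp(F_i)|}$ such types, and $P_{F_i}[\fhatt{i}{t}=G]\le e^{-tD(G||F_i)}$ for each. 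Since $\{G:\supp(G)\subset\supp(F_i),\ L(G,F_i)\ge\eta\}$ is compact and does not contain $F_i$, the relative entropy $D(\cdot||F_i)$ attains a strictly positive minimum $\beta$ on it, so $P_{F_i}[L(\fhatt{i}{t},F_i)\ge\eta]\le(t+1)^{|\supp(F_i)|}e^{-t\beta}$, which is summable over $t$. This gives $\E[\sum_{n=1}^N\id[J_n(i)\cap A_n^C\cap B_n]]=\lo(1)$.

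The main obstacle is the first part: turning the varying argument $\hat\mu^*(n)$ into the fixed value $\mu_1-\de$ by monotonicity, and then combining continuity in $\mu$ with lower semicontinuity in $F$ to extract a single L\'evy gap $\eta>0$ valid uniformly in $n$. Once this uniform gap is in hand, the type-counting tail bound is routine, and it is precisely the point at which the restriction to finite support models, and hence the availability of the method of types, is indispensable.
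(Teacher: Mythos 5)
Your proof is correct and follows essentially the same route as the paper: the key inclusion $A_n^C\cap B_n\subset\{L(\fhatn{i}{n},F_i)\ge\eta\}$ obtained by combining monotonicity of $\dmin$ in $\mu$ on $B_n$ with continuity in $\mu$ and lower semicontinuity in $F$, then the observation that distinct pulled rounds correspond to distinct values of $T_i'(n)$, and finally a summable large-deviation tail for the empirical distribution. The only cosmetic difference is that you bound $P_{F_i}[L(\fhatt{i}{t},F_i)\ge\eta]$ by a direct method-of-types count where the paper invokes Sanov's theorem; both yield the same exponential decay.
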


\begin{lemma}\label{lemD}
\begin{eqnarray}
\E\left[ \sum_{n=1}^N \id[J_n(i)\cap D_n]\right]=\lo(1).\n
\end{eqnarray}
\end{lemma}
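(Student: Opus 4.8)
The statement bounds the expected number of rounds at which arm $\Pi_i$ is pulled while the true best arm $\Pi_1$ has an atypically low empirical mean ($\muhatn{1}{n} < \mu_1 - \de$) but some other arm $\Pi_k$, $k \in L$, currently looks good ($\max_{k\in L}\muhatn{k}{n} \ge \mu_1 - \de$). The key point is that both conditions in $D_n$ are \emph{large-deviation events} for their respective arms: $\Pi_1$ must have been pulled enough times that its empirical mean should be near $\mu_1$, and likewise $\Pi_k$ has $\mu_k < \mu_1 - \de$ yet its empirical mean exceeds $\mu_1 - \de$. So the plan is to show $D_n$ itself — not just $J_n(i)\cap D_n$ — has probability summably small in $n$, which will dominate any arm-selection probability and give a convergent series, hence $\lo(1)$.

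**Key steps.** First I would decompose over the number of pulls of the ``misbehaving'' arms. Since $J_n(i) \cap D_n$ forces, in particular, $\max_{k\in L}\muhatn{k}{n} \ge \mu_1 - \de > \max_{k} \mu_k$, there must be some $k \in L$ with $\muhatt{k}{T_k'(n)} \ge \mu_1 - \de$ and $T_k'(n) = t$ for some $t \ge 1$. I would write
\begin{eqnarray}
\E\left[\sum_{n=1}^N \id[J_n(i)\cap D_n]\right]
&\le& \sum_{n=1}^N \sum_{k\in L}\sum_{t=1}^{n} P\!\left[J_n(i) \scap T_k'(n)=t \scap \muhatt{k}{t}\ge \mu_1-\de\right]
+ \sum_{n=1}^N P[B_n^C],\n
\end{eqnarray}
where the last term handles the constraint $\muhatn{1}{n} < \mu_1 - \de$ separately if convenient. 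The core estimate is Hoeffding's (or Sanov's, since we are in the finite-support regime and the method of types applies): for a fixed arm $\Pi_k$ with mean $\mu_k$, rewards in $[-1,0]$, and $\mu_1 - \de - \mu_k =: \Delta_k > 0$, we have $P[\muhatt{k}{t} \ge \mu_1 - \de] \le e^{-t c_k}$ with $c_k = c_k(\Delta_k) > 0$ a constant. Crucially, the event $\{\muhatt{k}{t} \ge \mu_1-\de\}$ depends only on the first $t$ rewards of $\Pi_k$, which are i.i.d.\ and independent of the event $\{J_n(i) \scap T_k'(n)=t\}$ structure in the way needed (this is the standard argument: condition on $T_k'(n)=t$ and use that the reward sequence drawn for those $t$ pulls is independent of the policy's choices given the count). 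Summing the geometric tail over $t$ gives $\sum_t P[\muhatt{k}{t}\ge\mu_1-\de] \le 1/(1-e^{-c_k}) < \infty$, but I need the extra decay in $n$ to make the outer sum over $n$ finite.

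**The main obstacle and how to handle it.** The subtlety is that for \emph{fixed} $k$ and the sum over $t$ up to $n$, the bound $\sum_{t=1}^n e^{-t c_k}$ is $\lo(1)$ in $t$ but $\Theta(1)$ overall, so naively $\sum_n (\text{const})$ diverges. The resolution is to exploit that on $D_n$ the arm $\Pi_1$ has \emph{also} misbehaved: $\muhatn{1}{n} < \mu_1 - \de$ forces $T_1'(n)$ to be ``small'' with overwhelming probability, because $\Pi_1$ is the best arm and ought to be pulled a constant fraction of the time. Alternatively — and I think this is the cleaner route — I would observe that on the event $J_n(i) \cap D_n$, the selection probability $p_i(n) \le \exp(-T_i'(n)\hat D_i(n))$ together with the fact that $\Pi_k$ (with high empirical mean) makes $\hat D_i(n)$ bounded below: since $\muhatn{*}{n} \ge \mu_1 - \de$, and by Lemma \ref{dinf} applied with the current best empirical mean, $\hat D_i(n) = \dmin(\fhatn{i}{n}, \muhatn{*}{n}) \ge \dmin(\fhatn{i}{n}, \mu_i + \text{(gap)}) \ge$ a positive constant once $\muhatn{i}{n}$ is near $\mu_i$ — which itself holds with high probability. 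So the real content is a double large-deviation bound: either $\Pi_i$'s empirical distribution is far from $F_i$ (probability $e^{-\Theta(T_i'(n))}$, and $T_i'(n)$ is large on the relevant events), or $\hat D_i(n)$ is bounded below by a constant and then $p_i(n) \le e^{-cT_i'(n)}$ is geometrically small, or $T_i'(n)$ itself is small — but if $T_i'(n)$ is small repeatedly while $n \to \infty$, then some other arm is being pulled, and by a pigeonhole/counting argument combined with the large-deviation bound on that arm's empirical mean, the total count stays bounded. I would make this rigorous by splitting on $T_i'(n) \ge (\log n)/K$ versus $T_i'(n) < (\log n)/K$: in the first case $p_i(n)$ (or the Hoeffding bound on $\fhatn{i}{n}$) is $\le n^{-\Omega(1)}$ after accounting for the union over the at most $n$ possible empirical types (method of types: polynomially many types, each exponentially unlikely), giving a summable-in-$n$ contribution; in the second case I sum over the other arm $\Pi_k$ responsible for $D_n$ and use its exponential mean-deviation bound together with $T_k'(n) \ge n/K - T_i'(n) - \sum_{\ell \ne i,k}T_\ell'(n)$, and again a method-of-types union bound controls the number of empirical distributions. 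The hard part is organizing these cases so the counting (method of types, contributing only polynomial factors) never overwhelms the exponential decay; this is exactly where the finite-support assumption is essential, as the paper emphasizes, and I expect the bookkeeping of which arm's large deviation to charge each term to be the most delicate piece.
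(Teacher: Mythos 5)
There is a genuine gap. You correctly diagnose the obstacle---for fixed $k$ the bound $\sum_{t=1}^{n}e^{-c_k t}$ is $\Theta(1)$, so summing over $n$ naively gives $\Theta(N)$---but none of your proposed resolutions supplies the mechanism that actually closes it. The paper's mechanism is this: drop the event $J_n(i)$ entirely and bound $\sum_n \id[D_n]$; on $D_n$ the \emph{current best} arm is some $k\in L$ with $\hat{\mu}_{k,t}=\hat{\mu}^*(n)>\mu_1-\de$ where $t=T_k'(n)$, and by \eqref{bound_best} a current best arm is pulled with probability at least $1/K$ in every round, so each round in this state has probability at least $1/K$ of incrementing $T_k'$ past $t$. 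Hence for each fixed pair $(k,t)$ the number of rounds $n$ with $T_k'(n)=t$ and $\Pi_k$ current best is stochastically dominated by a geometric random variable and has expectation at most $K$, \emph{uniformly in $N$}. This collapses the double sum over $(n,t)$ into $K\sum_{t}P_{F_k}[\hat{\mu}_{k,t}>\mu_1-\de]\le K\sum_t \lo(e^{-C_2 t})<\infty$ by Sanov's theorem, and the outer sum over $n$ never appears.

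Your three candidate fixes all aim at the wrong arms. The paper's proof uses no property of arm $i$ whatsoever, so lower-bounding $\hat{D}_i(n)$ or case-splitting on $T_i'(n)\gtrless (\log n)/K$ is a detour: nothing in $D_n$ forces $T_i'(n)$ to be large, and the ``pigeonhole'' step you defer to bookkeeping is precisely the missing argument. Two further concrete problems: first, your proposed decomposition peels off the constraint $\hat{\mu}_1(n)<\mu_1-\de$ into a separate term $\sum_n P[B_n^C]$, but that series need not converge (controlling rounds on which $\hat{\mu}_1$ stays low is exactly the hard content of Lemma \ref{lemC}, and even there only pulls of inferior arms are controlled); second, discarding that constraint also discards the fact that the high-mean arm $\Pi_k$ is the \emph{current best}, which is the only reason the count of rounds at fixed $t$ is bounded---without it, arm $i$ could be pulled arbitrarily many times while $T_k'(n)$ stays at $t$, and your first displayed sum over $n$ is not controlled. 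The independence point you invoke (that $\{\hat{\mu}_{k,t}\ge\mu_1-\de\}$ can be decoupled from the policy's choices) is correct but must be made rigorous by conditioning on the successive random round indices at which the state $\{T_k'(n)=t,\ \Pi_k \text{ current best}\}$ occurs, as the paper does with the variables $R_1,\dots,R_m$.
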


Before proving these lemmas,
we give intuitive interpretations for these terms.

$A_n$ represents the event that the estimator
 $\hat{D}_i(n)=\dmin(\fhatn{i}{n},\muhat^*(n))$
 of $\dmin(F_i,\\ \mu^*)$ is
 already close to $\dmin(F_i,\mu^*)$ and
 $\Pi_i$ is pulled with a small probability.
After sufficiently many rounds $A_n$ holds with
 probability close to $1$ and the term
$\sum_{n=1}^N \id[J_n(i)\cap A_n]$ is the main term of $T_i(N)$.

Other terms of \eqref{eq:five-terms} represent events
that $\Pi_i$ is pulled when each estimator is not yet close to the
true value.
The term involving $C_n$ is essential for the consistency of MED.

$A_n^C\scap B_n$ represents the following event:
$\hat{D}_i(n)$ has not converged because
$\fhatn{i}{n}$ is not close to $F_i$ although $\hat{\mu}^*(n)$ is
already close to $\mu_1$.
In this event $\Pi_i$ is pulled and therefore $\fhatn{i}{n}$ is
updated more frequently.
 As a result, $A_n^C\scap B_n$ happens only for a few $n$.

Similarly, $D_n$ represents the event that $\hat{\mu}_k$ happens to be
large for some $k\in L$.
Also in this event $\fhatn{k}{n}$ is updated more frequently 
 and $D_n$ happens only for a few $n$.

On the other hand, $C_n$ represents the event that $\hat{\mu}_1$ is not
yet close to $\mu_1$.
It requires many rounds for $\Pi_1$ to be pulled since $\Pi_1$ seems to
be inferior in this event.
Therefore $C_n$ may happen for many $n$.

\begin{proof}[Proof of Lemma \ref{lemA}]
By partitioning $\id[J_n(i)\cap A_n]$ according to the number of occurrences
 $\allowbreak \sum_{m=1}^{n-1}\id[J_m(i)\cap A_m]$ of the event
 $J_m(i)\cap A_m$ before the $n$-th round, we have
\begin{eqnarray}
\lefteqn{
\sum_{n=1}^N \id[\jn(i) \cap A_n]
}\quad\nn
&\le&
\frac{(1+\epsilon)\log N}{\di{i}}
+\sum_{n=1}^N \id\left[\jn(i)\scap A_n \scap
\left\{\sum_{m=1}^{n-1} \id[J_m(i) \cap A_m]> 
\frac{(1+\epsilon)\log N}{\di{i}}\right\}
\right].\n
\end{eqnarray}
Since $\sum_{m=1}^{n-1}\id[J_m(i)\scap A_m]\le
 \sum_{m=1}^{n-1}\id[J_m(i)]=T'_i(n)$, we obtain
\begin{eqnarray}
\sum_{n=1}^N \id[\jn(i) \cap A_n]
\le
\frac{(1+\epsilon)\log N}{\di{i}}+
\sum_{n=1}^N \id\left[\jn(i)\scap A_n \scap T'_i(n)>
\frac{(1+\epsilon)\log N}{\di{i}}
\right].\n
\end{eqnarray}
Taking the expected value we have
\begin{eqnarray}
\lefteqn{
\E\left[\sum_{n=1}^N \id[\jn(i) \cap A_n]\right]
}\quad\nn
&\le&
\frac{(1+\epsilon)\log N}{\di{i}}+
\sum_{n=1}^N
P\left[\jn(i)\scap A_n \scap T'(n)>\frac{(1+\epsilon)\log N}{\di{i}}
\right].\nn
&\le&
\frac{(1+\epsilon)\log N}{\di{i}}+ 
 \sum_{n=1}^N  P\left[J_n(i) \,\bigg|\, A_n\scap T_i'(n)>\frac{(1+\epsilon)\log
 N}{\di{i}}\right] \nn
&\le& \frac{(1+\epsilon)\log N}{\di{i}} +
N\exp\left(-\frac{(1+\epsilon)\log N}{\di{i}}
\frac{\di{i}}{1+\ep/2}\right)
\qquad(\mbox{by \eqref{bound_pj}})\nn
&=&\frac{(1+\epsilon)\log N}{\di{i}} +
 N^{-\frac{1+\ep}{1+\ep/2}+1}\n
\end{eqnarray}
The lemma is proved since $N^{-\frac{1+\ep}{1+\ep/2}+1}=\so(1)$.
\end{proof}

\begin{proof}[Proof of Lemma \ref{lemC}]
First we have
\begin{eqnarray}
\sum_{n=1}^N \id[J_n(i)\cap C_n]
&\le&
\sum_{n=1}^N \id[\jn\in L\scap C_n]\nn
&\le&
\sum_{t=1}^N\sum_{n=1}^{\infty}\id[\jn\in L\scap T_1'(n)=t\scap
C_n].\label{7-0}
\end{eqnarray}
From the technique of type \cite[Lemma 2.1.9]{LDP},
it holds for any type $Q\in \fb$ that
\begin{eqnarray}
P_{F_1}[\fhatt{1}{t}=Q]\le \exp(-tD(Q||F_1))
\le\exp(-t\dmin(Q,\mu_1)).\label{7-19}
\end{eqnarray}

Let $\bm{R}=(R_1,\dots,R_m)$ be the smallest $m$ integers in
 $\{n:T_1'(n)=t\scap C_n\}$. $\bm{R}$ is well defined
on the event
$m\le \sum_{n=1}^{\infty}\id[\jn\in L\scap T_1'(n)=t\scap C_n]$.
Let $\bm{r}=(r_1,\dots,r_m)\in\mathbb{N}^m$ be a realization of $\bm{R}$.
Here recall that we write an event e.g.
 ``$\cdots \scap \bm{R}=\bm{r} \scap \fhatt{1}{t}=Q$'' instead of 
``$\cdots \scap \{\bm{R}=\bm{r}\} \scap \{\fhatt{1}{t}=Q\}$''.
Then we obtain for any $\bm{r}$ that
{\allowdisplaybreaks
\begin{eqnarray}
\lefteqn{
P\left[
\left\{\sum_{n=1}^{\infty}\id[\jn\in L\scap T_1'(n)=t\scap C_n]\ge m
\right\}
\scap
\bm{R}=\bm{r}\scap \fhatt{1}{t}=Q
\right]
}\quad\nn
&=&
P\left[\bigcap_{l=1}^m\{J_{r_l}\in L\}\scap \bm{R}=\bm{r}\scap
  \fhatt{1}{t}=Q\right]\nn
&=&
P_{F_1}[\fhatt{1}{t}=Q]
\prod_{l=1}^m \Bigg(P\left[R_l=r_l
\ \Big|\ 
\bigcap_{k=1}^{l-1} \{J_{r_k}\in L\scap R_k=r_k\}\scap
\fhatt{1}{t}=Q
	  \right]\nn
&&\qquad\times  P\left[J_{r_l}\in L
\ \Big|\ 
R_l=r_l\scap\bigcap_{k=1}^{l-1} \{J_{r_k}\in L\scap R_k=r_k\}\scap
\fhatt{1}{t}=Q
\right]\Bigg)\nn
&\le&
P_{F_1}[\fhatt{1}{t}=Q]
\prod_{l=1}^m \Bigg( P\left[R_l=r_l
\ \Big|\ 
\bigcap_{k=1}^{l-1} \{J_{r_k}\in L\scap R_k=r_k\}\scap
\fhatt{1}{t}=Q\right]\nn
&&\qquad\times
\left(1-\frac 1 K \exp(-t
			\dmin(Q,\,\mu_1-\de))\right)\Bigg)
\qquad(\mbox{by \eqref{bound_pj} and $\hat{\mu}^*(R_l)<\mu_1-\de$})\nn
&=&
P_{F_1}[\fhatt{1}{t}=Q]
\left(1-\frac 1 K \exp(-t
			\dmin(Q,\,\mu_1-\de))\right)^m\nn
&&\qquad\times\prod_{l=1}^m P\left[R_l=r_l
\ \Big|\ 
\bigcap_{k=1}^{l-1} \{J_{r_k}\in L\scap R_k\in r_l\}\scap
\fhatt{1}{t}=Q
\right].\n
\end{eqnarray}
}
By taking the disjoint union of $\bm{r}$, we have
\begin{eqnarray}
\lefteqn{
P\left[\left\{\sum_{n=1}^{\infty}\id[\jn\in L\scap T_1'(n)=t\scap C_n]\ge m
\right\}
\scap \fhatt{1}{t}=Q \right]
}\nn
&\qquad\le&
P_{F_1}[\fhatt{1}{t}=Q]
\left(1-\frac 1 K \exp(-t
			\dmin(Q,\,\mu_1-\de))\right)^m.
\label{7-20}
\end{eqnarray}
Then we have
\begin{eqnarray}
\lefteqn{
\E\Bigg[\sum_{n=1}^{\infty}\id[\jn\in L\scap T_1'(n)=t\scap C_n]
\Bigg]
}\quad\nn
&=&
\sum_{Q:\E(Q)<\mu_1-\de}\sum_{m=1}^{\infty}
P\Bigg[
\left\{\sum_{n=1}^{\infty}\id[\jn\in L\scap T_1'(n)=t\scap C_n]\ge
 m\right\}
\scap \fhatt{1}{t}=Q\Bigg]\nn
&\le&
\sum_{Q:\E(Q)<\mu_1-\de}\sum_{m=1}^{\infty}
\exp(-t\dmin(Q,\mu_1))
\left(1-\frac 1 K \exp(-t
			\dmin(Q,\,\mu_1-\de))\right)^m\nn
&&\phantom{wwwwwwwwwwwwwwwwwwwwwwwwwwwwwwww}
(\mbox{by \eqref{7-19} and \eqref{7-20}})\nn
&\le&
K\sum_{Q:\E(Q)<\mu_1-\de}
\exp\Big(-t\big(\dmin(Q,\mu_1)-\dmin(Q,\,\mu_1-\de)\big)\Big)\nn
&&
\nn
&\le&
K\sum_{Q:\E(Q)<\mu_1-\de}
\exp(-t\,C(\mu_1,\mu_1-\de))
\qquad(\mbox{by Lemma \ref{dinf}})\nn
&\le&
K(t+1)^{|\supp(F_1)|}\exp(-t\,C(\mu_1,\mu_1-\de)).\label{7-9}
\end{eqnarray}
The last inequality holds since there are at most
 $(t+1)^{|\supp(F_1)|}$ combinations as a type
 of $t$ samples from $F_1$.

Finally we obtain from \eqref{7-0}, \eqref{7-9} and $C(\mu_1,\mu_1-\de)>0$ that
\begin{eqnarray}
\E\left[\sum_{n=1}^N \id[J_n(i)\cap C_n]\right]
\le
\sum_{t=1}^N K(t+1)^{|\supp(F_1)|} \exp(-tC(\mu_1,\mu_1-\de))
=\lo(1) \n
\end{eqnarray}
and the proof is completed.
\end{proof}

In the proofs of remaining two lemmas, 
we use \cite[Theorem 6.2.10]{LDP} on the empirical distribution: 
\begin{theorem}[Sanov's Theorem]\label{sanov}
For every closed set $\Gamma$ of probability distributions
\begin{eqnarray}
\limsup_{t\to\infty}\frac 1 t \log P_F[\hat{F}_t\in\Gamma]
\le
-\inf_{G\in \Gamma}D(G||F).\n
\end{eqnarray}
where $\hat{F}_t$ is the empirical distribution of $t$ samples from $F$.
\end{theorem}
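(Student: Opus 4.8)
The plan is to prove the version of this statement that the paper actually needs, namely the case in which $F$ has \emph{finite} support, by the method of types; this is enough because Theorem~\ref{sanov} is invoked only in the proofs of Lemmas~\ref{lemnAB} and~\ref{lemD}, where $F$ is one of the $F_j\in\fb$, so the empirical distribution $\hat{F}_t$ is always supported on the finite set $\supp(F)$. (For general $F$ on a Polish space one refers to \cite[Theorem 6.2.10]{LDP}; we do not reprove that here.) So fix $F$ with $\supp(F)=\{x_1,\dots,x_M\}$ and regard every $\hat{F}_t$ as a type of $t$ samples over the $M$-letter alphabet $\supp(F)$.

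First I would recall two elementary counting facts about types. The number of distinct types of $t$ samples over an $M$-letter alphabet is at most $(t+1)^M$, and for every such type $Q$,
\begin{eqnarray}
P_F[\hat{F}_t=Q]\le\exp(-tD(Q||F)),\n
\end{eqnarray}
which is the first inequality already used as \eqref{7-19} in the proof of Lemma~\ref{lemC} (it follows from $P_F[\hat{F}_t=Q]=\binom{t}{tQ(x_1),\dots,tQ(x_M)}\,e^{-t(H(Q)+D(Q||F))}$ together with $\binom{t}{tQ(x_1),\dots,tQ(x_M)}\le e^{tH(Q)}$, where $H$ denotes Shannon entropy; see \cite[Lemma 2.1.9]{LDP}).

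Next I would union-bound over the types lying in $\Gamma$. Writing $\mathcal{P}_t$ for the finite set of types of $t$ samples over $\supp(F)$,
\begin{eqnarray}
P_F[\hat{F}_t\in\Gamma]
&=&\sum_{Q\in\Gamma\cap\mathcal{P}_t}P_F[\hat{F}_t=Q] \nn
&\le&(t+1)^M\exp\Big(-t\inf_{Q\in\Gamma\cap\mathcal{P}_t}D(Q||F)\Big) \nn
&\le&(t+1)^M\exp\Big(-t\inf_{G\in\Gamma}D(G||F)\Big),\n
\end{eqnarray}
where the last step uses $\Gamma\cap\mathcal{P}_t\subset\Gamma$; if $\Gamma\cap\mathcal{P}_t=\emptyset$ the left-hand side is $0$ and the bound is trivial. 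Taking $\frac1t\log(\cdot)$ gives $\frac1t\log P_F[\hat{F}_t\in\Gamma]\le \frac{M\log(t+1)}{t}-\inf_{G\in\Gamma}D(G||F)$, and since $\frac{M\log(t+1)}{t}\to0$, letting $t\to\infty$ yields the claimed bound.

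The one point that needs care is the passage from the infimum over types in $\Gamma\cap\mathcal{P}_t$ to the infimum over all of $\Gamma$: a priori $\Gamma$ may contain distributions not supported on $\supp(F)$, but any such $G$ has $D(G||F)=+\infty$ and hence does not affect $\inf_{G\in\Gamma}D(G||F)$, which makes the required inequality immediate. Note that in this finite-support regime the argument never uses that $\Gamma$ is closed -- closedness, together with lower semicontinuity of $D(\cdot||F)$ and a covering argument, is needed only for the general Polish-space version -- so no real obstacle arises here; the only ingredient is the method-of-types counting already exploited in the proof of Lemma~\ref{lemC}.
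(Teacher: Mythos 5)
Your proof is correct as far as it goes, but note that the paper does not prove this statement at all: Theorem~\ref{sanov} is quoted verbatim from \cite[Theorem 6.2.10]{LDP} as a black-box result, so there is no internal proof to match against. What you supply instead is a self-contained method-of-types proof of the finite-alphabet special case: the type-counting bound $|\mathcal{P}_t|\le (t+1)^M$, the per-type bound $P_F[\hat F_t=Q]\le\exp(-tD(Q\|F))$ (the same \cite[Lemma 2.1.9]{LDP} the paper already invokes as \eqref{7-19} in Lemma~\ref{lemC}), a union bound, and the observation that $\inf_{Q\in\Gamma\cap\mathcal{P}_t}D(Q\|F)\ge\inf_{G\in\Gamma}D(G\|F)$ because $\Gamma\cap\mathcal{P}_t\subset\Gamma$; the polynomial prefactor then vanishes in the $\frac1t\log$ limit. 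All of these steps are sound, and your observations that closedness of $\Gamma$ is not needed in this regime and that distributions in $\Gamma$ not absolutely continuous with respect to $F$ contribute $+\infty$ and so cannot lower the infimum are both accurate (the latter is not even needed, since the inequality between the two infima already goes the right way). The one caveat is that you prove strictly less than the stated theorem, which places no finiteness restriction on $F$; you justify this by checking that in both applications (Lemmas~\ref{lemnAB} and~\ref{lemD}) the theorem is applied with $F=F_i$ or $F=F_k$ in $\fb$, so the empirical measure lives on a fixed finite alphabet -- that check is correct, and what you gain is an elementary, fully self-contained argument in the spirit of the paper's existing type-counting estimates, at the cost of not literally establishing the general Polish-space statement the paper chose to quote.
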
 

\begin{proof}[Proof of Lemma \ref{lemnAB}]
We apply Sanov's Theorem with $F=F_i$ and
\begin{eqnarray}
\Gamma=\{G\in\fb :L(F_i,G)\ge \de_1\}\n
\end{eqnarray}
where $\de_1>0$ is a constant.
Since 
$
\inf_{G\in\Gamma}D(G||F_i)>0,
$
there exists a constant $C_1>0$ such that
\begin{eqnarray}
P_{F_i}[\fhatt{i}{t}\in\Gamma] \le \exp(-C_1t)\label{pgamma}
\end{eqnarray}
 for sufficiently large $t$.

Now we show
\begin{eqnarray}
\{A^C_n\scap B_n\}\subset \{\hat{F}_i(n)\in\Gamma\}
\label{8-26}
\end{eqnarray}
or equivalently
 $\{\fhatn{i}{n}\notin \Gamma \scap B_n\}\subset A_n$
for sufficiently small $\de_1$.
If $\hat{F}_i(n)\notin \Gamma_1$ and $B_n$, then
\begin{eqnarray} 
\dmin(\fhatn{i}{n},\hat{\mu}^*(n))
\ge\dmin(\fhatn{i}{n},\mu^*-\de)\n
\end{eqnarray}
from $\hat{\mu}^*(n)\ge\hat{\mu}_1(n)\ge\mu_1-\de=\mu^*-\de$
 and the monotonicity of $\dmin$ in $\mu$.
Since $\dmin(F_i,\mu^*-\de)>0$, for sufficiently small $\de_1$ we obtain 
\begin{eqnarray}
\dmin(\fhatn{i}{n},\,\mu^*-\de)\ge\frac{\dmin(F_i,\mu^*-\de)}{1+\ep/3}\n
\end{eqnarray}
from the lower semicontinuity in $F$ of $\dmin$ in
 Lemma \ref{conti}.
Moreover, from the continuity of $\dmin$ in $\mu$, it holds for sufficiently
 small $\de$ that
\begin{eqnarray}
\frac{\dmin(F_i,\mu^*-\de)}{1+\ep/3}\ge\frac{\di{i}}{1+\ep/2}.\n
\end{eqnarray}
Then $A_n$ holds and \eqref{8-26} is proved.

From \eqref{8-26} we obtain
\begin{eqnarray}
\lefteqn{
\Bigg\{\sum_{n=1}^{N}\id[\jn(i)\scap A^C_n\scap B_n]\ge m\Bigg\}
}\nn
&\qquad\subset&
\Bigg\{\sum_{n=1}^N\id[\jn(i)\scap\fhatn{i}{n}\in\Gamma]\ge m\Bigg\}\nn
&\qquad=&
\Bigg\{\sum_{t=1}^N\id\left[\bigcup_{n=1}^N
\left\{\jn(i)\scap T'_i(n)=t\scap\fhatt{i}{t}\in\Gamma\right\}
\right]\ge m\Bigg\}\label{henkei}\\
&\qquad\subset&
\Bigg\{\sum_{t=1}^N\id\left[\fhatt{i}{t}\in\Gamma
\right]\ge m\Bigg\}\nn
&\qquad\subset&
\bigcup_{l=m}^N \{\fhatt{i}{l}\in\Gamma\}.\label{pab23}
\end{eqnarray}
\eqref{henkei} follows because there is at most one $n$ 
such that $J_n(i)\scap T_i(n)=t$.

Finally, from \eqref{pgamma} and \eqref{pab23} we obtain
\begin{eqnarray}
\E\left[\sum_{n=1}^N \id[J_n(i)\cap A^C_n \cap B_n]\right]
&=&
\sum_{m=1}^N P\left[\sum_{n=1}^{N}\id[\jn(i)\scap A^C_n\scap B_n]\ge
m\right]\nn
&\le&
\sum_{m=1}^N\sum_{l=m}^N P_{F_i}[\fhatt{i}{l}\in\Gamma]\nn
&=&
\lo(1).\n
\end{eqnarray}
\end{proof}

\begin{proof}[Proof of Lemma \ref{lemD}]
First we simply bound $\sum_{n=1}^N \id[J_n(i)\cap D_n]$ by
$$
\sum_{n=1}^N \id[J_n(i)\cap D_n] \le \sum_{n=1}^\infty\id[D_n].
$$
Since
$
D_n\subset\bigcup_{k\in L}\{\muhatn{k}{n}=\hat{\mu}^*(n)>\mu_1-\de\}, 
$
it holds that
\begin{eqnarray}
\sum_{n=1}^\infty \id[D_n]
&\le&
\sum_{k\in L}\sum_{n=1}^\infty
\id[\muhatn{k}{n}=\hat{\mu}^*(n)>\mu_1-\de]\nn
&=&
\sum_{k\in L}\sum_{t=1}^{\infty}\sum_{n=1}^{\infty}
\id[\muhatt{k}{t}=\hat{\mu}^*(n)>\mu_1-\de\scap T'_k(n)=t].\label{dn1}
\end{eqnarray}

Now we use a reasoning similar to \eqref{7-20}.
Let $\bm{R}=(R_1,\dots,R_m)$ be the smallest $m$ integers in
 $\{n:T_k'(n)=t\scap \muhatt{k}{t}=\hat{\mu}^*(n)
>\mu_1-\de\}$.
 $\bm{R}$ is well defined
on the event
$m\le \sum_{n=1}^\infty
\id[T_k'(n)=t\scap \muhatt{k}{t}=\hat{\mu}^*(n)>\mu_1-\de]$.
Then we have
\begin{eqnarray}
\lefteqn{
P\left[\sum_{n=1}^{\infty}
\id[T_k'(n)=t\scap \muhatt{k}{t}=\hat{\mu}^*(n)>\mu_1-\de]\ge m\right]
}\nn
&\qquad=&
P_{F_k}[\muhatt{k}{t}>\mu_1-\de]\,P\left[\sum_{n=1}^{\infty}
\id[T_k'(n)=t\scap \muhatt{k}{t}=\hat{\mu}^*(n)]\ge m \,\Bigg|\,
\muhatt{k}{t}>\mu_1-\de
\right]\nn
&\qquad\le&
P_{F_k}[\muhatt{k}{t}>\mu_1-\de]\,P\left[
\prod_{l=1}^{m-1}\left\{
J_{R_l}\neq k
\right\}\,\Bigg|\,
\muhatt{k}{t}>\mu_1-\de
\right]\nn
&\qquad\le&
P_{F_k}[\muhatt{k}{t}>\mu_1-\de]\left(1-\frac{1}{K}\right)^{m-1}\n
\end{eqnarray}
from $\muhatn{k}{R_l}=\hat{\mu}^*(R_l)$ and \eqref{bound_best}.
Therefore we obtain
\begin{eqnarray}
\lefteqn{
\E\left[\sum_{n=1}^{\infty}
\id[\muhatt{k}{t}=\hat{\mu}^*(n)>\mu_1-\de\scap T'_k(n)=t]
\right]
}\nn
&\qquad=&
\sum_{m=1}^{\infty}P\left[\sum_{n=1}^{\infty}
\id[T_1'(n)=t\scap \muhatt{k}{t}=\hat{\mu}^*(n)>\mu_1-\de]\ge m\right]\nn
&\qquad\le&
K\,P_{F_k}[\muhatt{k}{t}>\mu_1-\de].\label{dn2}
\end{eqnarray}

On the other hand, it holds from Sanov's theorem that for a constant $C_2>0$
\begin{eqnarray}
P_{F_k}[\muhatt{k}{t}>\mu_1-\de]=\lo(\exp(-C_2t))\label{lem9LDP}
\end{eqnarray}
by setting $F=F_k$ and $\Gamma=\{G\in\fb:\E(G)\ge\mu_1-\de\}$.
From \eqref{dn1}, \eqref{dn2} and \eqref{lem9LDP}, we obtain
\begin{eqnarray}
\E\left[\sum_{n=1}^N\id[D_n]\right]&\le&
\sum_{k\in L}\sum_{t=1}^\infty K\lo(\exp(-C_2 t))\nn
&=&\lo(1).\n
\end{eqnarray}
\end{proof}

\begin{proof}[Proof of Theorem \ref{thm8}]
(i)
$h''(\nu)=0$ holds only for the degenerate case that $f_i=1$ at $x_i=\mu$
and this case does not satisfy the assumption $\E(F)<\mu$.
Therefore $h''(\nu)<0$ and $h(\nu)$ is strictly concave.
$\nu^*$ is unique from the strict concavity.

Now we show \eqref{unif_opt}, \eqref{opt_le} and \eqref{opt_ge}
 by the technique of Lagrange multipliers.
The Lagrangian function for \eqref{problem} is written as
\begin{eqnarray}
\sum_{i=1}^M f_i\log \frac{f_i}{g_i}-\sum_{i=1}^M \lambda_i
 g_i+\nu\left(\mu-\sum_{i=1}^M x_i g_i\right)+\xi\sum_{i=1}^M g_i.\n
\end{eqnarray}
Then there exists a Kuhn-Tucker vector
 $(\lambda_1^*,\cdots,\lambda_M^*,\nu^*,\xi^*)$ for the problem
 \eqref{problem} from \cite[Theorem 28.2]{rockafellar}. 
On the other hand it is obvious that the problem \eqref{problem} has an
 optimal solution $G^*=(g_1^*,\cdots,g_M^*)$.
From \cite[Theorem 28.3]{rockafellar},
 $(g_1^*,\cdots,g_M^*)$ is an optimal value
 and
 $(\lambda_1^*,\cdots,\lambda_M^*,\nu^*,\xi^*)$ is a Kuhn-Tucker vector
if and only if
the following Kuhn-Tucker
 conditions are satisfied:
\begin{eqnarray}
&&-\frac{f_i}{g_i^*}-\lambda_i^*-x_i\nu^*+\xi^*=0, \ \forall i\nn
&&g_i^*\ge 0,\,\lambda_i\ge 0,\,g_i\lambda_i=0,\,\forall i,\nn
&&\sum_{i=1}^M x_i g_i^*\ge\mu,\,\nu^*\ge 0,\,\nu^*\left(\mu-\sum_{i=1}^M
						  x_ig_i^*\right)=0,\nn
&&\sum_{i=1}^M g_i^*=1.\n
\end{eqnarray}

First we consider the case $\E_F[\mu/X]\le 1$.
In this case, it is easily checked that
\begin{eqnarray}
g_i^*&=&\begin{cases}
\frac{\mu f_i}{x_i}&i\neq 1\\
1-\sum_{i=2}^{M}\frac{\mu f_i}{x_i}&i=1,
\end{cases}\n
\end{eqnarray}
$\lambda_i^*=0$, $\nu^*=\momss$ and $\xi^*=0$ satisfy Kuhn-Tucker
 conditions since $f_1=0$ and $f_i>0$ for $i\neq 1$.
Therefore \eqref{opt_le} is obtained.
\eqref{unif_opt} follows from $h'(\momss)\ge 0$ and the concavity of
 $h(\nu)$.

Now we consider the second case $\E_F[\mu/X]\ge 1$.
Since $h'(0)>0$, $h'(\momss)\le 0$ and $h(\nu)$ is concave,
\begin{eqnarray}
\max_{0\le\nu\le\mom}h(\nu)=\max_{\nu}h(\nu)\label{const_iranai}
\end{eqnarray}
holds and $\nu^*=\argmax_{0\le\nu\le\momss}h(\nu)$
satisfies
\begin{eqnarray}
-h'(\nu^*)=\sum_{i=1}^{M}f_i\frac{
 x_i-\mu}{1-(x_i-\mu)\nu^*}=0.\label{bibun0}
\end{eqnarray}
From \eqref{bibun0} we obtain
\begin{eqnarray}
\sum_{i=1}^{M}\frac{f_i}{1-(x_i-\mu)\nu^*}
&=&
\sum_{i=1}^{M}f_i\frac{1-(x_i-\mu)\nu^*}{1-(x_i-\mu)\nu^*}
+\nu^*\sum_{i=1}^{M}f_i\frac{x_i-\mu}{1-(x_i-\mu)\nu^*}=1
\label{nu_sum1}
\end{eqnarray}
and
\begin{eqnarray}
\sum_{i=1}^{M}\frac{f_i x_i}{1-(x_i-\mu)\nu^*}
&=&
\sum_{i=1}^{M}f_i\frac{x_i-\mu}{1-(x_i-\mu)\nu^*}+
\mu\sum_{i=1}^{M}\frac{f_i}{1-(x_i-\mu)\nu^*}=\mu.\label{nu_ave}
\end{eqnarray}
From \eqref{nu_sum1} and \eqref{nu_ave},
it is easily checked that
\begin{eqnarray}
g_i^*&=&\begin{cases}
\frac{f_i}{1-(x_i-\mu)\nu^*}&f_i>0\\
0&f_i=0,
\end{cases}\nn
\lambda_i^*&=&\begin{cases}
0&f_i>0\\
1-(x_i-\mu)\nu^*&f_i=0,
\end{cases}\n
\end{eqnarray}
$\xi^*=1+\mu\nu^*$
and $\nu^*$
 satisfy Kuhn-Tucker conditions and \eqref{unif_opt} is obtained.
\eqref{opt_ge} follows immediately from \eqref{const_iranai}.

(ii)
The claim is obviously true for the case $\E_F[\mu/X]\le 1$
 and we consider the case
 $\E_F[\mu/X]\ge 1$.

Define
\begin{eqnarray}
w(x,\nu)\equiv\frac{x-\mu}{1-(x-\mu)\nu}.\n
\end{eqnarray}
For any fixed $\nu\in[0,\momss]$, $w(x,\nu)$ is convex in $x\in[-1,0]$.
Therefore
\begin{eqnarray}
h'(\nu)
&=&
-\sum_{i=1}^M f_i w(x_i,\nu)\nn
&\ge&
-\sum_{i=1}^Mf_i \big(-x_i w(-1,\nu)+(1+x_i)w(0,\nu)\big)\nn
&=&
\E(F)w(-1,\nu)-(1+\E(F))w(0,\nu).\label{totu}
\end{eqnarray}
The right-hand side of \eqref{totu} is $0$
 for $\nu=\numins$
 and therefore
\begin{eqnarray}
h'\left(\numin\right)\ge 0.\n
\end{eqnarray}
Since $h'(\nu)$ is monotonically decreasing,
$\nu^*\ge \numins$
is proved. 

(iii)
It is obvious
that $\hen{}{\mu}\dmin(F,\nu)=\nu^*=\momss$ 
for
 $\E_F[\mu/X]<1$ and 
\begin{eqnarray}
\lim_{\ep\downarrow
 0}\frac{\dmin(F,\mu+\ep)-\dmin(F,\mu)}{\ep}=\mom\n
\end{eqnarray}
 for $\E_F[\mu/X]=1$.

Define $\dmin'(F,\mu)\equiv \max_{\nu}h(\nu)$.
Then $\dmin(F,\mu)=\dmin'(F,\mu)$ for the case $\E_F[\mu/X]\ge 1$. 
From \cite[Corollary 3.4.3]{fiacco},
$\dmin'(F,\mu)$ is differentiable in $\mu$ with
\begin{eqnarray}
\hen{}{\mu}\dmin'(F,\nu)=\hen{}{\mu}h(\nu)\bigg|_{\nu=\nu^*}=\nu^*.\n
\end{eqnarray}
Therefore we obtain
\begin{eqnarray}
\hen{}{\mu}\dmin(F,\nu)=\hen{}{\mu}\dmin'(F,\nu)=\nu^*\n
\end{eqnarray}
for $\E_F[\mu/X]> 1$ and
\begin{eqnarray}
\lim_{\ep\downarrow
 0}\frac{\dmin(F,\mu-\ep)-\dmin(F,\mu)}{-\ep}
&=&
\lim_{\ep\downarrow
 0}\frac{\dmin'(F,\mu-\ep)-\dmin'(F,\mu)}{-\ep}\nn
&=&\nu^*=\mom\n
\end{eqnarray}
 for $\E_F[\mu/X]=1$.
\end{proof}

\section{Experiments}
\label{section-experiments}
In this section, we present some simulation results on our MED
and UCB policies in \cite{ucb}.

First we give an algorithm for computing $\nu^*$ and $\dmin(F,\mu)$
 with parameters $r,\nu_0$, which we denote by $\dmin(F,\mu;r,\nu_0)$.
Here $r$ is a repetition number and $\nu_0$ is an initial value of $\nu$ for
the optimization in Theorem \ref{thm8}.
Recall that $h,h',h''$ are defined in \eqref{h0}, \eqref{h1} and \eqref{h2}.
\mbox{}\\
\noindent {\bf [Computation of $\dmin(F,\mu; r,\nu_0)$]}
\begin{algorithmic}
\REQUIRE $r>0$, $\nu_0\ge 0$;
\IF {$f_1=0$ and $\mu\sum_{i\neq 1}\frac{f_i}{x_i}\le 1$}
 \RETURN $
\left(h\left(\mom\right),\mom\right)$;
\ENDIF
\STATE $\unu,\nu:=\frac{\mu-\E(F)}{-\mu(1+\mu)};\,
\onu:=\mom$;
\IF {$\nu_0\in(\unu,\onu)$}
 \STATE $\nu:=\nu_0$;
\ENDIF
\FOR {$t:=1$ to $r$}
 \IF{$h'(\nu)>0$}
  \STATE $\unu:=\nu$;
 \ELSE
  \STATE $\onu:=\nu$;
 \ENDIF
 \STATE $\nu:=\nu-h'(\nu)/h''(\nu)$;
 \IF {$\nu\notin (\unu,\onu)$}
  \STATE $\nu:=\frac{\unu+\onu}{2}$;
 \ENDIF
\ENDFOR
\RETURN
 $
\left(\max_{\nu'\in\{\unu,\onu,\nu\}}h(\nu'),\,
\argmax_{\nu'\in\{\unu,\onu,\nu\}}h(\nu')\right)$;
\end{algorithmic}
\vspace{1.5mm}
In this algorithm, a lower and an upper bound of $\nu^*$ are
 given by $\unu$ and $\onu$, respectively.
In each step, the next point is determined based on Newton's method
by $\nu:=\nu-h'(\nu)/h''(\nu)$.
When $\nu$ does not improve the bounds $\unu,\,\onu$,
the next point is determined by bisection method, $\nu:=(\unu+\onu)/2$.
The complexity of the algorithm is given by $\lo(r\,|\supp(F)|)$.

The complexity $\lo(r\,|\supp(F)|)$ is not very small
 when $|\supp(F)|$ is large.
Especially it requires
$\lo(r\,T_i(n))\,(\approx\lo(r\log n))$ computations 
when it is adopted for a continuous support
 model since $|\supp(\fhatt{i}{t})|\le t$. 
On the other hand, $\dmin(F,\mu)$ is differentiable in $\mu$ (with
 slope $\nu^*$) and the
 argument $\mu$ converges to $\mu^*$ after sufficiently many rounds.
Therefore it is reasonable to approximate $\dmin(F,\mu)$ by past
 value of $\dmin(F,\mu;\nu_0,r)$ until the variation of $\mu$ is small. 
In this point of view, we implemented our MED policy for our simulations
 in the following way:
\begin{quote}
{\bf [An implementation of MED policy]}

{\bf Parameter:}\ Integer $r>0$ and real $d>0$. 

{\bf Initialization:}\
\begin{enumerate}
\item Pull each arm  once.
\item Set $(\hat{D}_i,\nu_i):=\dmin(\fhatt{i}{1},\,\muhat^*(K+1);0,r)$ and
 $m_i:=\muhat^*(K+1)$ for each $i=1,\cdots,K$.
\end{enumerate}
{\bf Loop:} For the $n$-th round,
\begin{enumerate}
\item Update variables for each $i$:
\begin{itemize}
\item If $J_{n-1}\neq i$ and $|\muhat^*(n)-m_i|<d$ then
      $\hat{D}_i:=\hat{D}_i+\nu_i(\muhat^*(n)-m_i)$.
\item Otherwise $(\hat{D}_i,\nu_i):=\dmin(\fhatn{i}{n},
\,\muhat^*(n);\nu_i,r)$ and
      $m_i:=\muhatn{i}{n}$.
\end{itemize} 
\item Choose arm $\Pi_j$ according to the
      probability
\begin{eqnarray}
p_j(n)\equiv \frac{\exp(-T_j'(n)\hat{D}_j)}{\sum_{i=1}^K
 \exp(-T_i'(n)\hat{D}_i)}.\n
\end{eqnarray}
\end{enumerate}
\end{quote}

Now we describe the setting of our experiments.
We used MED, UCB-tuned and UCB2.
Each plot is an average over 1,000 different runs.
The parameter $\alpha$ for UCB2 is set to $0.001$, the choice of which
is not very important for the performance (see \cite{ucb}).
First we check the effect of the choice of the parameters $r$ and $d$.
Then MED and UCB policies are compared.

In the following simulations, we use the model where the support is
included in $[0,1]$.
Note that in the computation of $\dmin(F,\mu; \nu,r)$ we assumed that the
support is included in $[-1,0]$ for computational
convenience.
Then, all rewards are passed to computation after $1$ is subtracted
from them in MED.

Table \ref{diss} gives the list of distributions used in the experiments.
They cover various situations on the computation of $\dmin$ and how
distinguishable the optimal arm is.
Distributions 1-4 are examples of 2-armed bandit problems.
In Distribution 1,
$\nu^*\ge \numins$
 in Theorem \ref{thm8} always
holds with equality since $\supp(F_i)\subset\{0,1\}$.
 Therefore the exact solution can be obtained by
$\dmin(F,\mu;\,\nu,r)$ regardless of $r$.
Also in Distribution 2, $\dmin(F,\mu; \nu,r)$ does
not require the repetition after sufficiently many rounds since
$\E_{F_2}[\mu_1/X]<1$.
On the other hand in Distribution 3, the maximization \eqref{opt_ge}
is necessary in almost all rounds since $\E_{F_2}[\mu_1/X]>1$.
Distribution 4 is an example of a difficult problem where
 the optimal arm
is hard to distinguish since the inferior arm appears to be optimal at
first with high probability.
Distribution 5 and 6 are examples of more general problems where
 the numbers of arms $K$ and the support sizes are large.
$\mathrm{Be}(\alpha,\beta)\;\:(\alpha,\beta>0)$
 in Distribution 6 denotes beta
distribution which has the density function
\begin{eqnarray}
\frac{x^{\alpha-1}(1-x)^{\beta-1}}{\mathrm{B}(\alpha,\beta)}\quad
 \mbox{ for }x\in[0,1] \n
\end{eqnarray}
where $\mathrm{B}(\alpha,\beta)$ is beta function.
Note that beta distributions have continuous support and are
 not included in $\fb$ and therefore the
performance of MED is not assured theoretically.
However, MED is still formally applicable since the supports are bounded.

\begin{table}[t]
\caption{Distributions for experiments.}
\label{diss}
\begin{center}
{\normalsize
{\renewcommand\arraystretch{1.2}
\begin{tabular}{|ll|}
\hline
Distribution 1:&\\
$F_1(\{0\})=0.45,\,F_1(\{1\})=0.55$&$\E(F_1)=0.55$\\
$F_2(\{0\})=0.55,\,F_2(\{1\})=0.45$&$\E(F_2)=0.45$\\
\hline
Distribution 2:&\\
$F_1(\{0.4\})=0.5,\,F_1(\{0.8\})=0.5$&$\E(F_1)=0.6$\\
$F_2(\{0.2\})=0.5,\,F_2(\{0.6\})=0.5$&$\E(F_2)=0.4$\\
\hline
Distribution 3:&\\
$F_1(\{x\})=0.08\,$ for
 $x=0,\,0.1,\cdots,0.9$,\;$F_1(\{1\})=0.2$&$\E(F_2)=0.56$\\
$F_2(\{x\})=\frac{1}{11}\,\:\:\;$ for $x=0,\,0.1,\cdots,0.9,\,1$&$\E(F_2)=0.5$\\
\hline
Distribution 4:&\\
$F_1(\{0\})=0.99,\,F_1(\{1\})=0.01$&$\E(F_1)=0.01$\\
$F_2(\{0.008\})=0.5,\,F_2(\{0.009\})=0.5$&$\E(F_2)=0.0085$\\
\hline
Distribution 5:&\\
$F_1(\{x\})=0.08\,$ for
 $x=0,\,0.1,\cdots,0.9$,\;$F_1(\{1\})=0.2$&$\E(F_1)=0.56$\\
$F_i(\{x\})=\frac{1}{11}\,\:\:\;$ for $x=0,\,0.1,\cdots,0.9,\,1$&$\E(F_i)=0.5$\\
&for $i=2,3,4,5$\\
\hline
Distribution 6:&\\
$F_1=\mathrm{Be}(0.9,0.1)$&$\E(F_1)=0.9$\\
$F_2=\mathrm{Be}(7,3)$&$\E(F_2)=0.7$\\
$F_3=\mathrm{Be}(0.5,0.5)$&$\E(F_3)=0.5$\\
$F_4=\mathrm{Be}(3,7)$&$\E(F_4)=0.3$\\
$F_5=\mathrm{Be}(0.1,0.9)$&$\E(F_5)=0.1$\\
\hline
\end{tabular}
}
}
\end{center}
\end{table}

The labels of each figure are as follows.
``regret'' denotes $\sum_{i:\mu_i<\mu^*}(\mu^*-\mu_i)T_i(n)$, which is
the loss due to choosing suboptimal arms.
``\% best arm played'' is the percentage that the best arm is chosen, that
is, $100\times T_1(n)/n$ in these problems.
``Dmin'' stands for the asymptotic bound for a consistent policy,
$\sum_{i:\mu_i<\mu^*}(\mu^*-\mu_i)\log n/\dmin(F_i,\mu^*)$.
The asymptotic slope of the regret (in the semi-logarithmic plot)
 of a consistent policy is more than or equal to that of ``Dmin''.

\begin{figure}[tb]
 \begin{minipage}{0.5\hsize}
  \begin{center}
   \includegraphics[bb=86 210 503 590,clip,width=65mm]{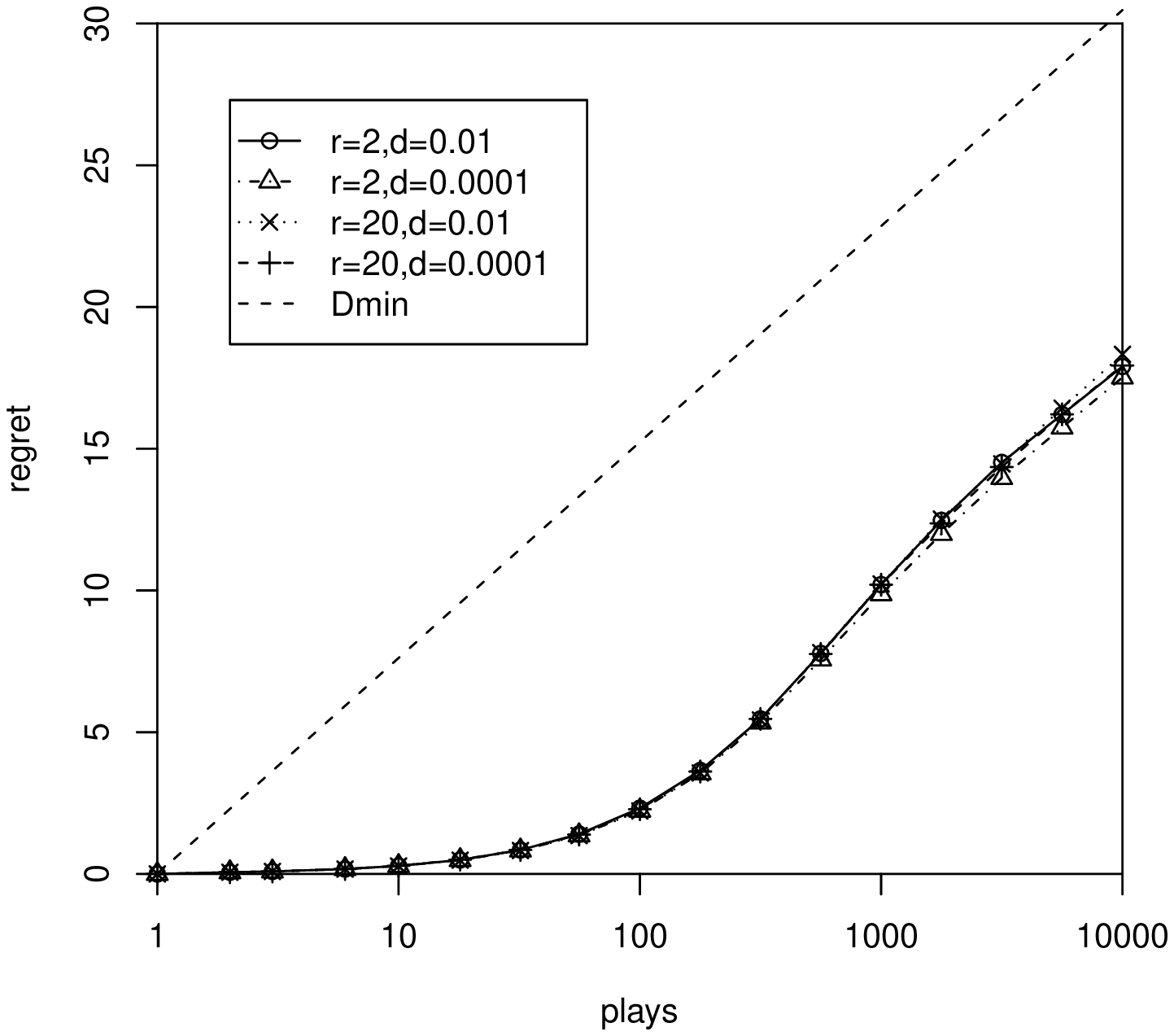}
  \end{center}
 \end{minipage}
 \begin{minipage}{0.5\hsize}
  \begin{center}
   \includegraphics[bb=86 210 503 590,clip,width=65mm]{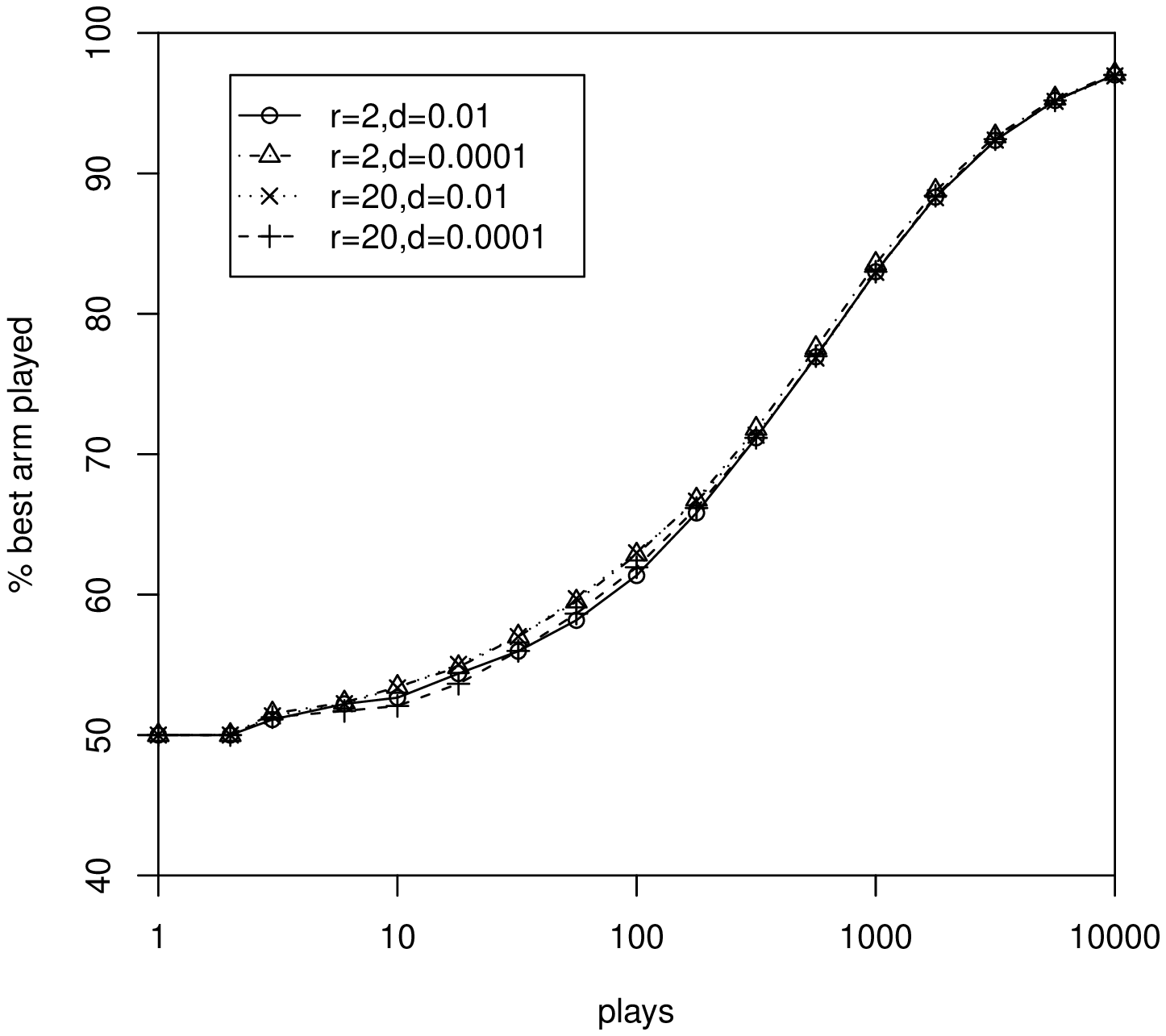}
  \end{center}
 \end{minipage}
 \caption{Comparison between different parameters of MED.}
 \label{ex1}
\end{figure}

Figure \ref{ex1} shows an experiment on the choice of the parameters $r$
and $d$ of MED for Distribution 3.
Our implementation of MED approaches the ideal MED as $d\to 0$ and
$r\to\infty$.
However, we see from the figure that the performance is not sensitive to
the choice of $r,\,d$.
This may be understood as follows: (1) the linear approximation for
the case $|\muhat^*(n)-m_i|<d$ is accurate, (2) the initial value $\nu_i$
in $\dmin(\fhatn{i}{n},\,\muhat^*(n);\nu_i,r)$ seems to be a good
approximation of $\nu^*$ and the repetition
number does not have to be large. 
We use $r=2$ and $d=0.01$ in the remaining experiments based on this result.

Now we summarize the remaining experiments on the comparison of the
policies (Figure 2--7).
\begin{itemize}
\item MED always seems to be achieving the asymptotic bound even
      for continuous support distributions, since 
the asymptotic slope of the regret is close to that of ``Dmin''.
\item MED performs best except for Distribution 1 where MED performs
      worst.
However, the consistency of UCB-tuned is not proved unlike
      MED and UCB2.
It appears that UCB-tuned might not be consistent, because
the asymptotic slope of $T_2(n)$ seems to be smaller than that of ``Dmin''.
Note that the theoretical logarithmic term of the regret is very near
      between MED and UCB2 for Distribution 1 ($4.983\log n$ and
      $5.025\log n$, respectively).
Therefore this result can be interpreted as follows: MED achieves the
      asymptotic bound but needs some improvement in the constant term
      of the regret compared to UCB2.
\end{itemize}
\begin{figure}[tbhp]
 \begin{minipage}{0.5\hsize}
  \begin{center}
   \includegraphics[bb=86 220 503 590,clip,width=65mm]{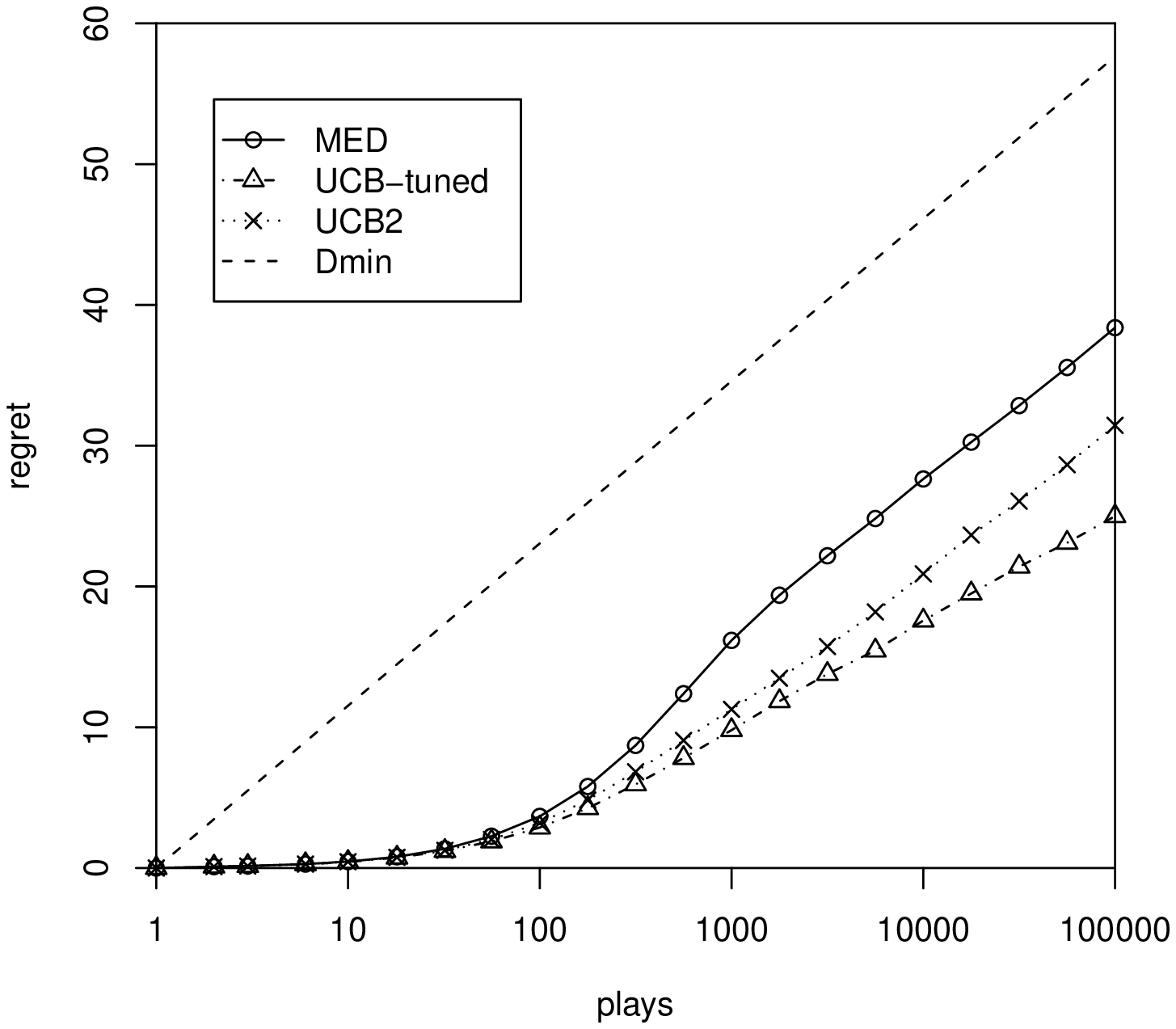}
  \end{center}
 \end{minipage}
 \begin{minipage}{0.5\hsize}
  \begin{center}
   \includegraphics[bb=86 220 503 590,clip,width=65mm]{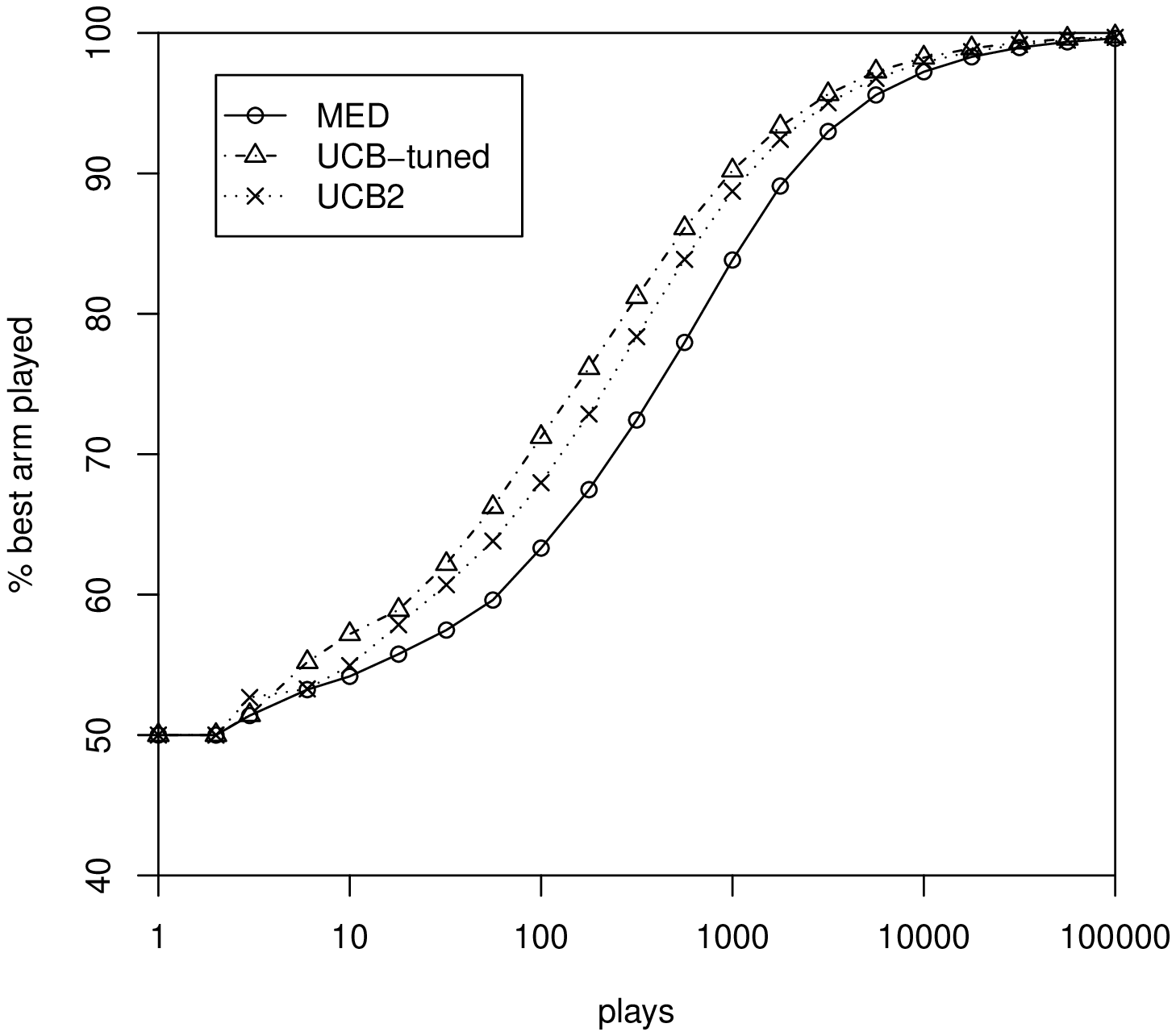}
  \end{center}
 \end{minipage}\vspace{-2mm}
 \caption{Simulation result for Distribution 1 (Bernoulli distributions).
}
 \label{ex2}
\vspace{7mm}
 \begin{minipage}{0.5\hsize}
  \begin{center}
   \includegraphics[bb=86 220 503 590,clip,width=65mm]{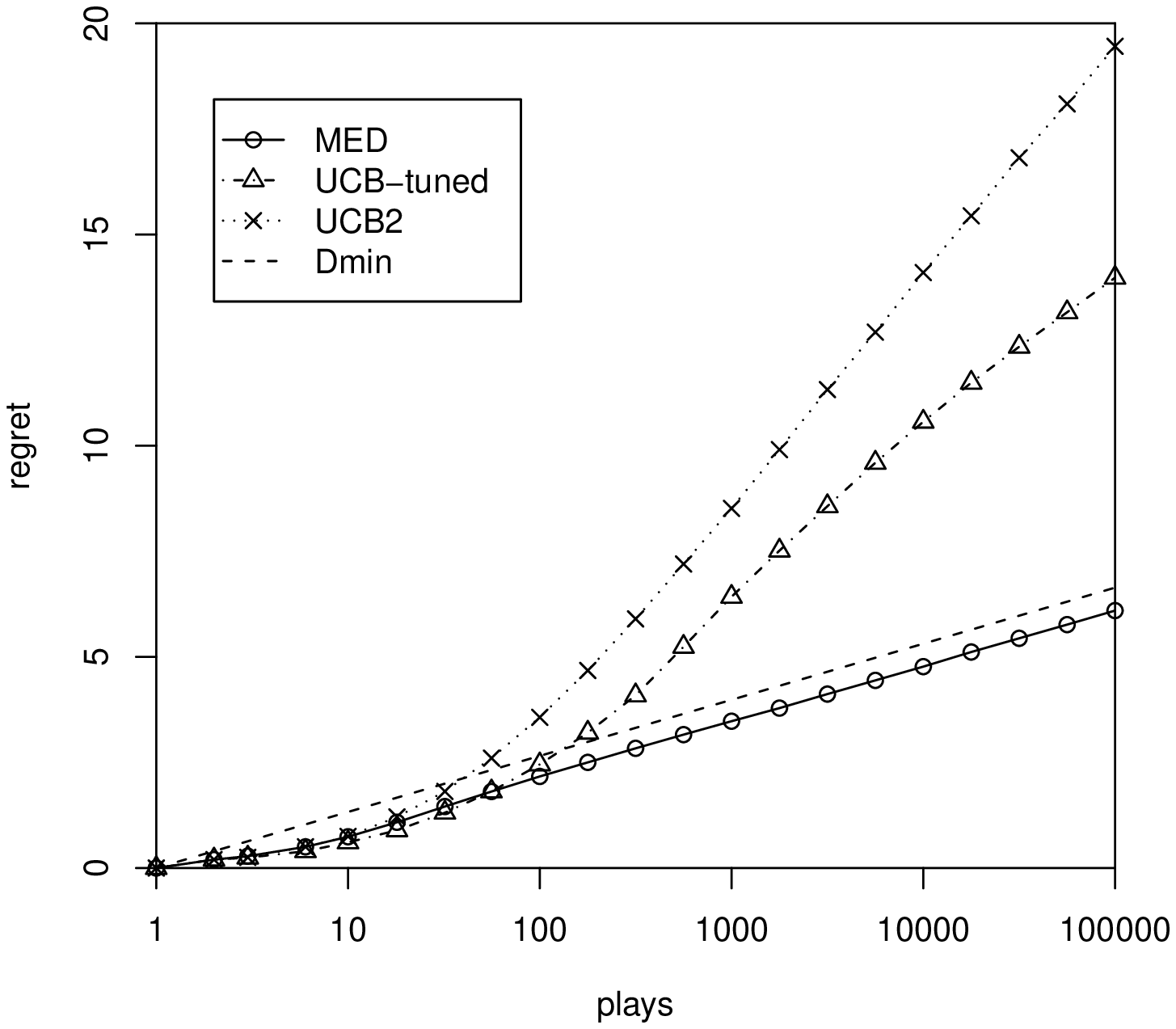}
  \end{center}
 \end{minipage}
 \begin{minipage}{0.5\hsize}
  \begin{center}
   \includegraphics[bb=86 220 503 590,clip,width=65mm]{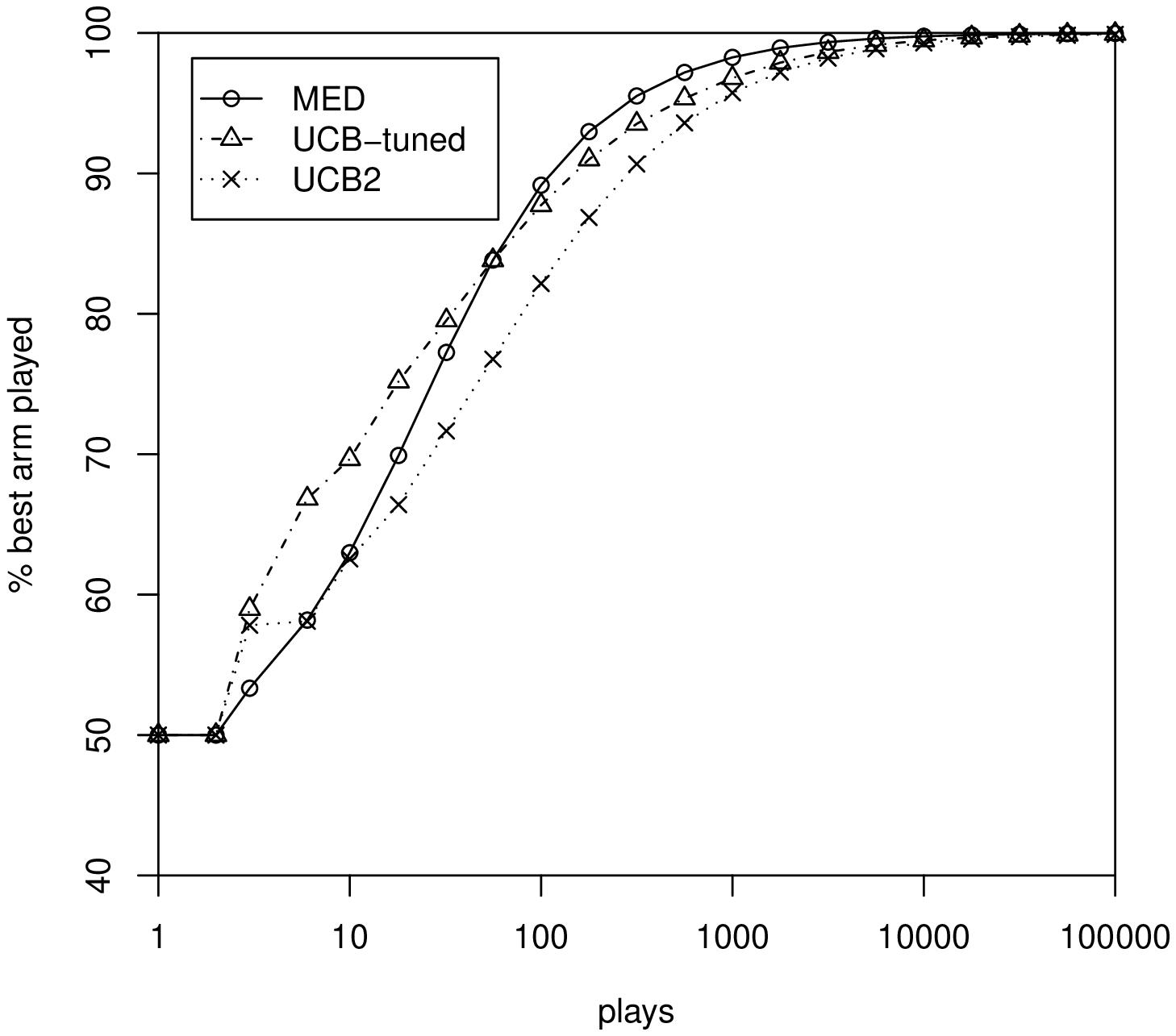}
  \end{center}
 \end{minipage}\vspace{-2mm}
\caption{Simulation result for Distribution 2 (uniform distributions with
 different supports).}
 \label{ex3}
\vspace{7mm}
 \begin{minipage}{0.5\hsize}
  \begin{center}
   \includegraphics[bb=86 220 503 595,clip,width=65mm]{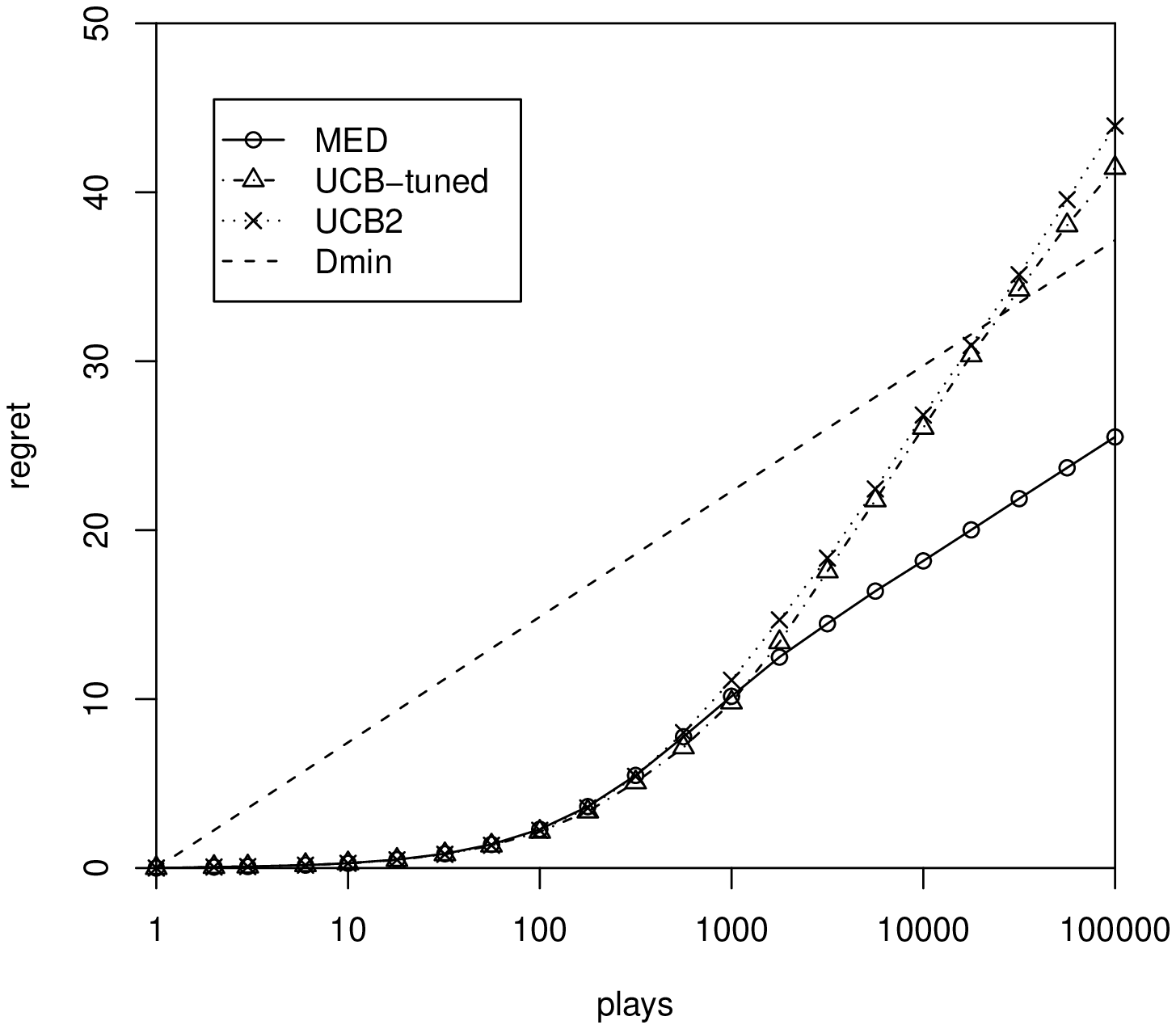}
  \end{center}
 \end{minipage}
 \begin{minipage}{0.5\hsize}
  \begin{center}
   \includegraphics[bb=86 220 503 595,clip,width=65mm]{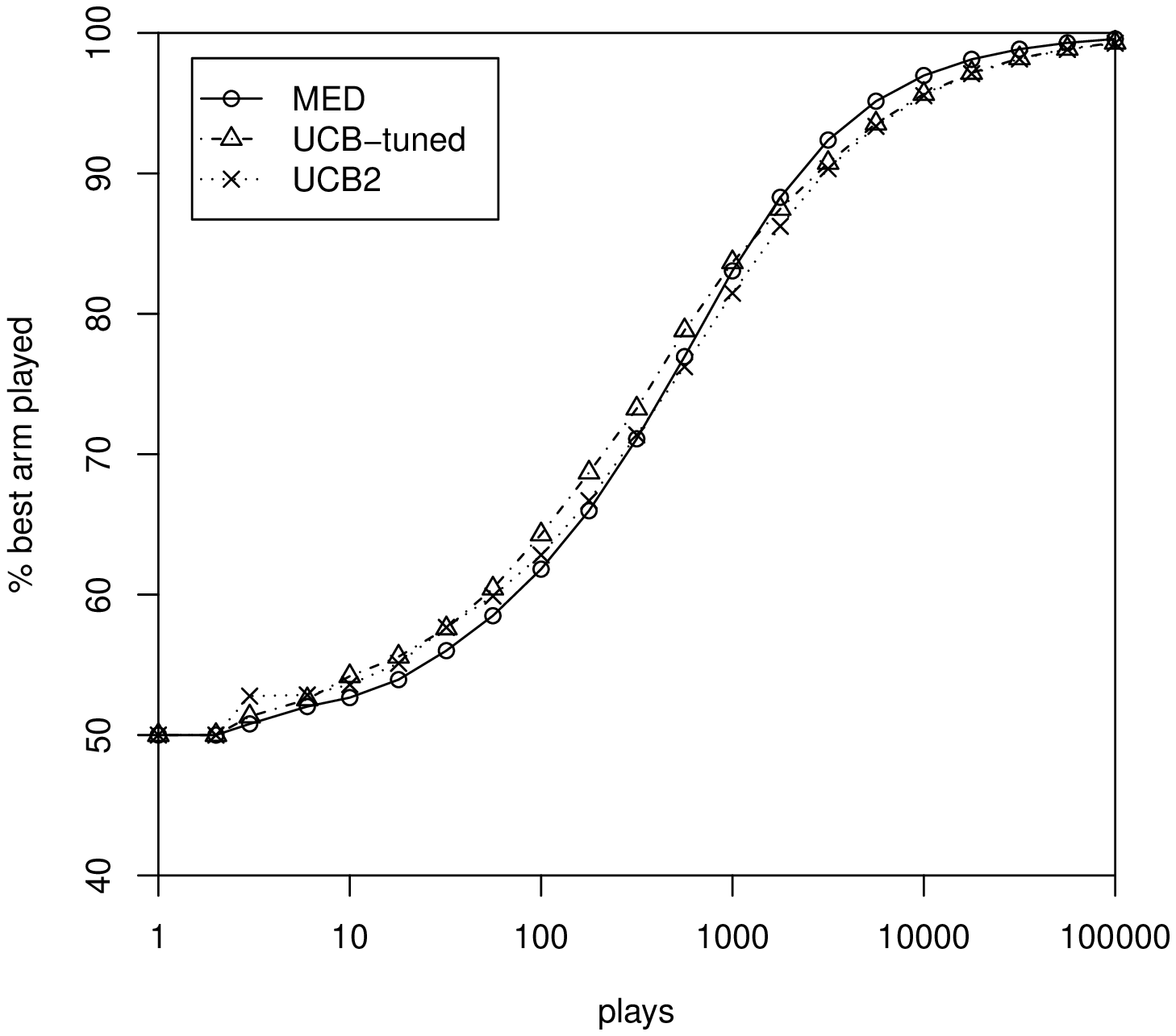}
  \end{center}
 \end{minipage}\vspace{-2mm}
 \caption{Simulation result for Distribution 3 (distributions 
where $\dmin$ is computed by repetitions
).}
 \label{ex4}\vspace{-7mm}
\end{figure}

\begin{figure}[tbhp]
 \begin{minipage}{0.5\hsize}
  \begin{center}
   \includegraphics[bb=86 220 503 595,clip,width=65mm]{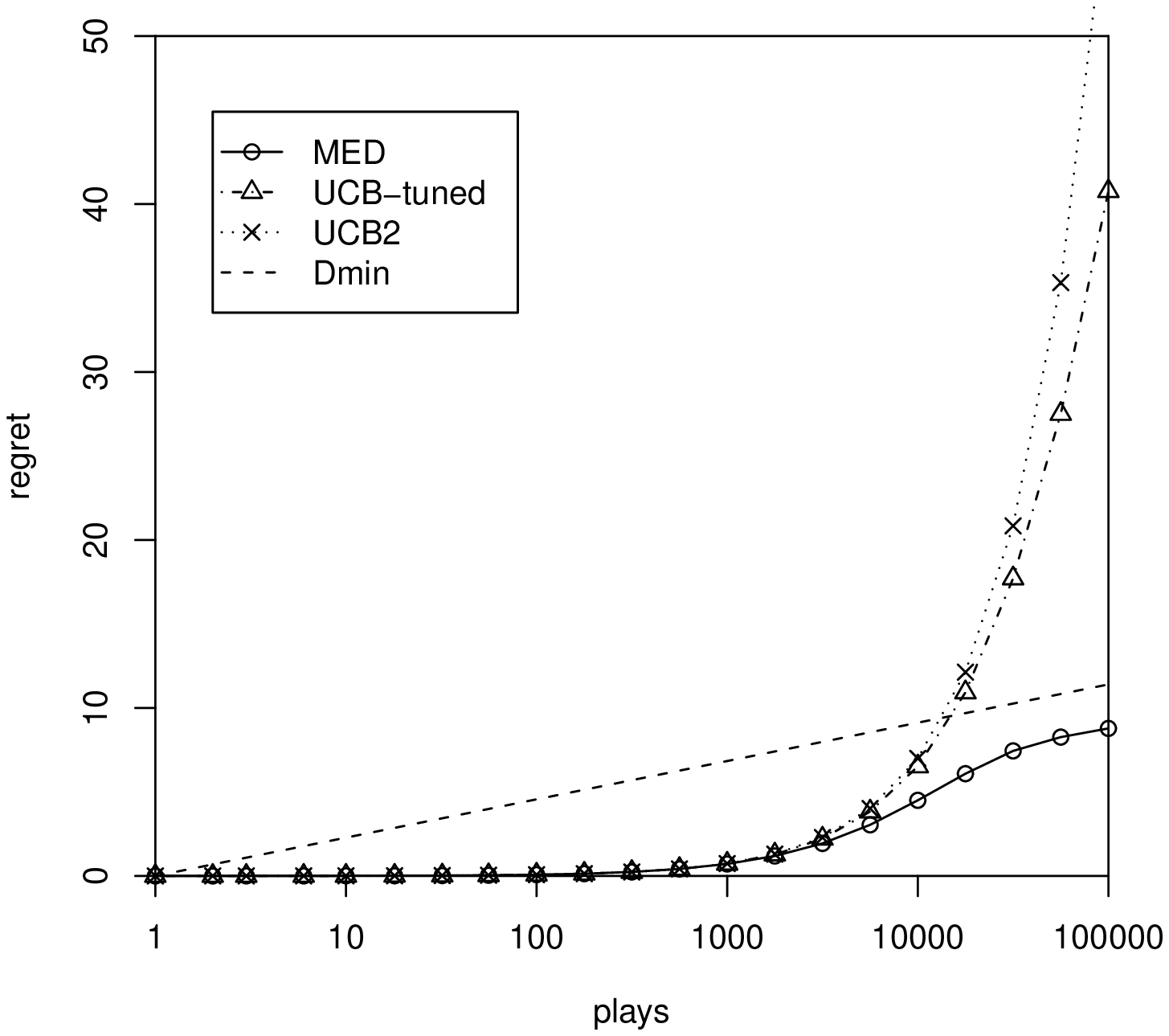}
  \end{center}
 \end{minipage}
 \begin{minipage}{0.5\hsize}
  \begin{center}
   \includegraphics[bb=86 220 503 595,clip,width=65mm]{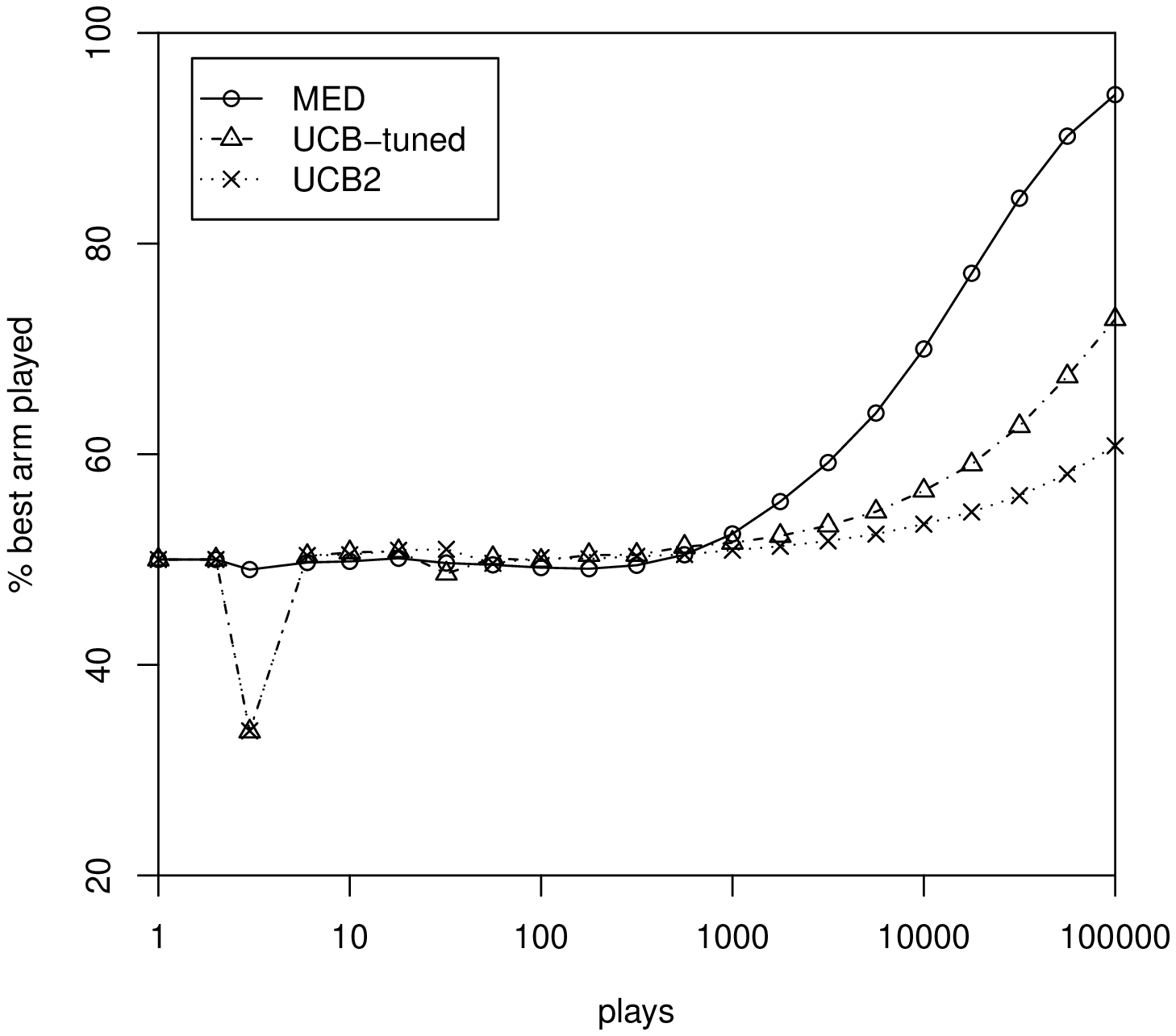}
  \end{center}
 \end{minipage}\vspace{-2mm}
 \caption{Simulation result for Distribution 4 (very confusing distributions).}
 \label{ex5}
\vspace{7mm}
 \begin{minipage}{0.5\hsize}
  \begin{center}
   \includegraphics[bb=86 220 503 595,clip,width=65mm]{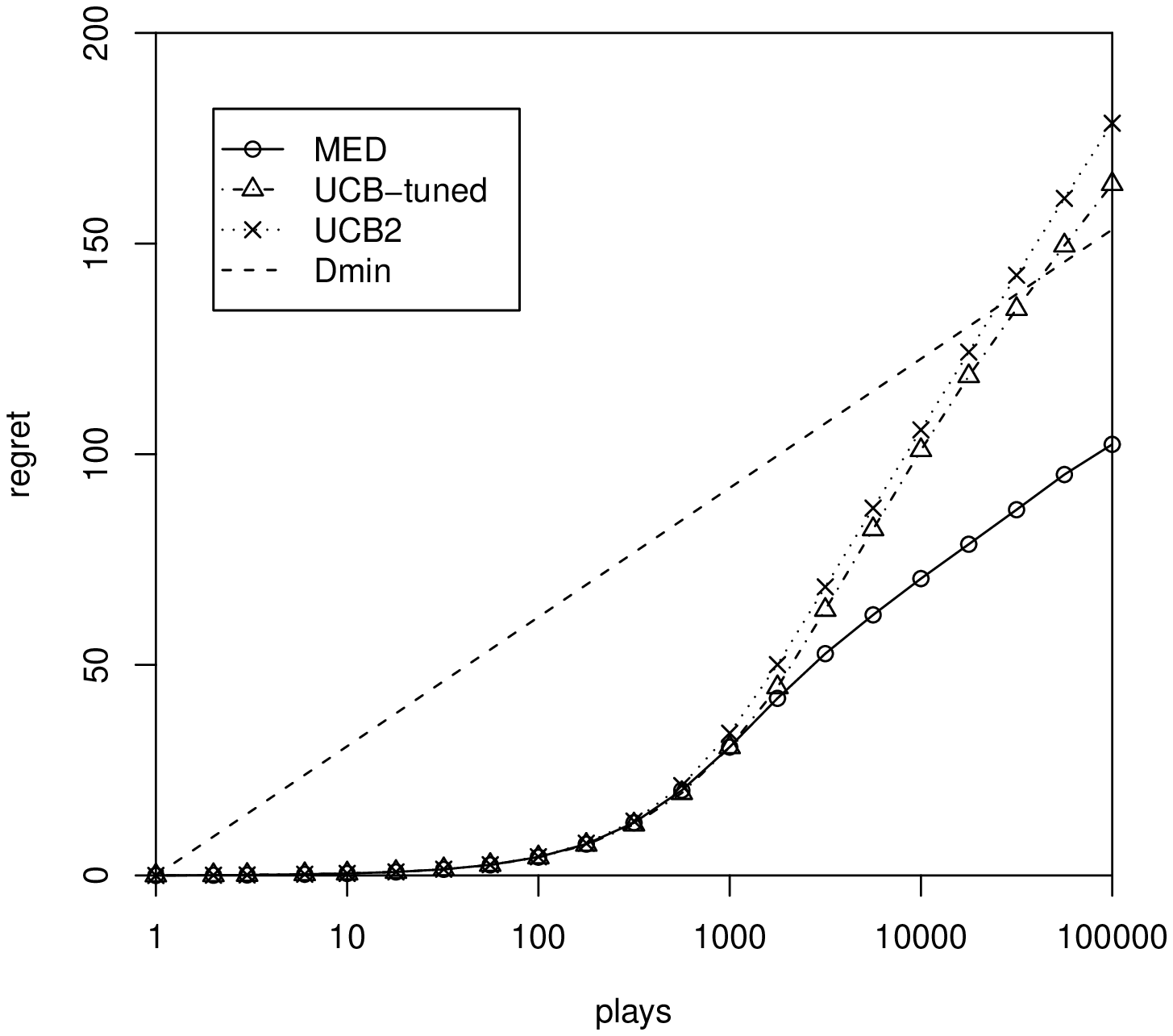}
  \end{center}
 \end{minipage}
 \begin{minipage}{0.5\hsize}
  \begin{center}
   \includegraphics[bb=86 220 503 595,clip,width=65mm]{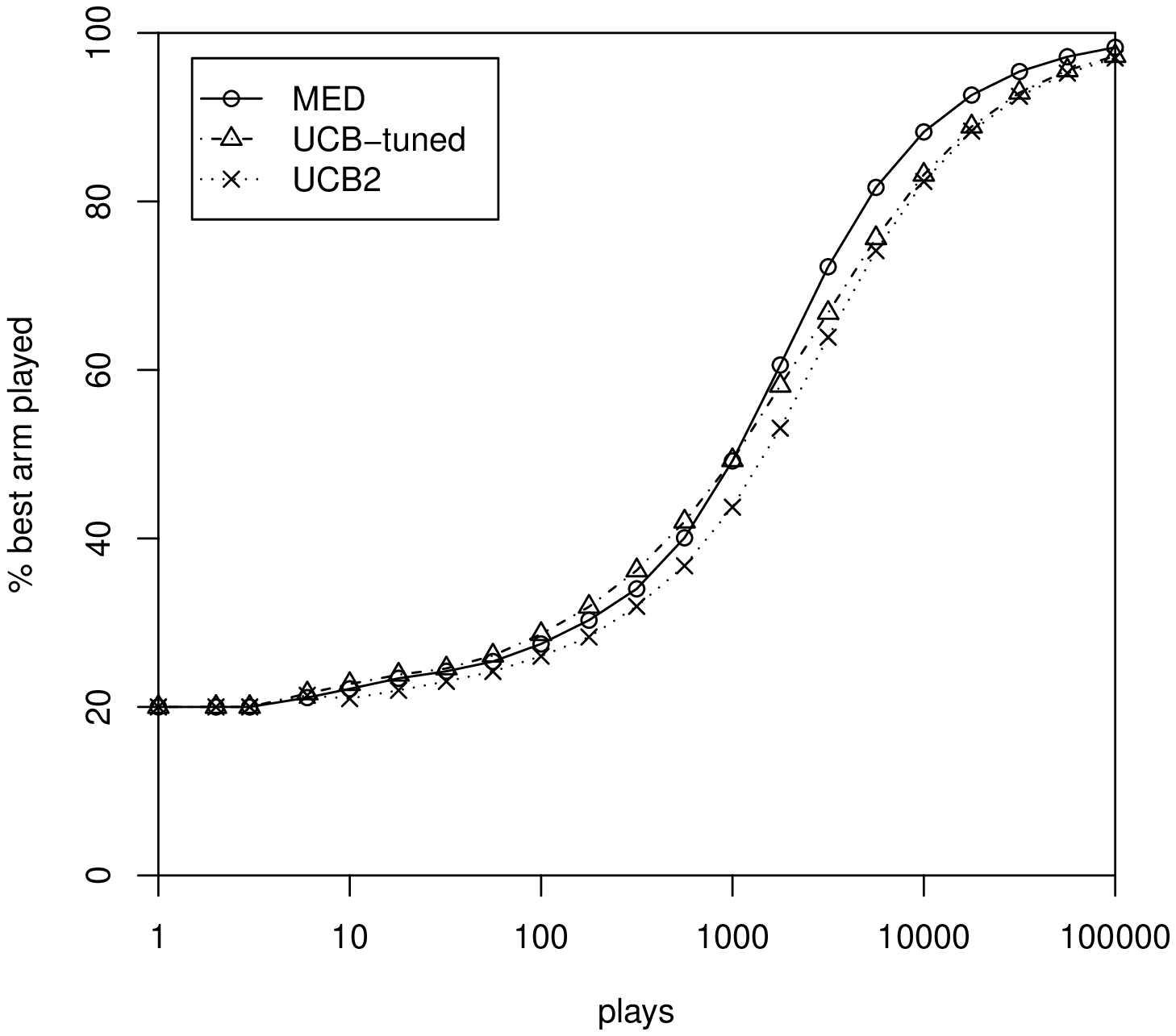}
  \end{center}
 \end{minipage}\vspace{-2mm}
 \caption{Simulation result for Distribution 5 (5 arms with a wide support).}
 \label{ex6}
\vspace{7mm}
 \begin{minipage}{0.5\hsize}
  \begin{center}
   \includegraphics[bb=86 220 503 595,clip,width=65mm]{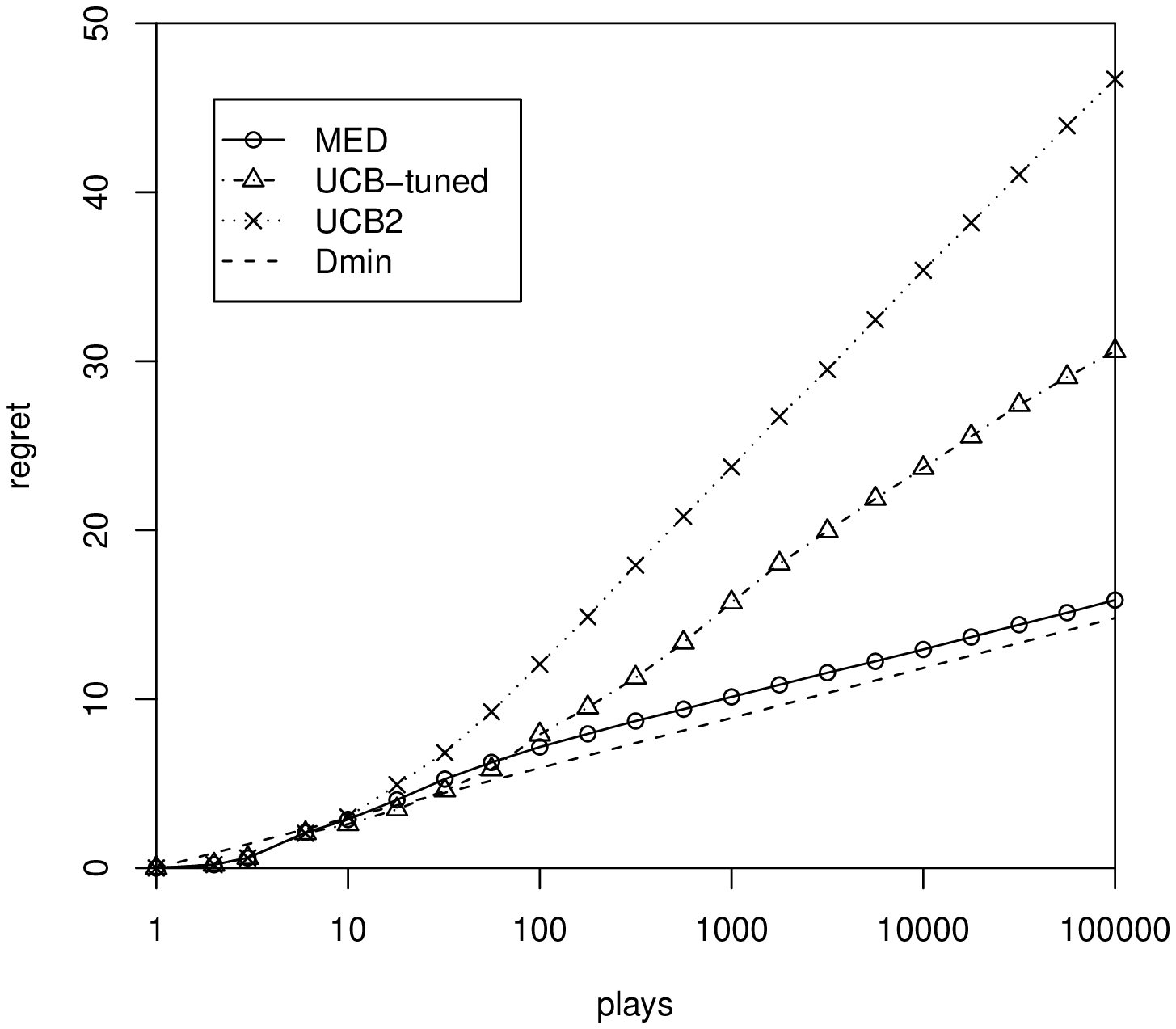}
  \end{center}
 \end{minipage}
 \begin{minipage}{0.5\hsize}
  \begin{center}
   \includegraphics[bb=86 220 503 595,clip,width=65mm]{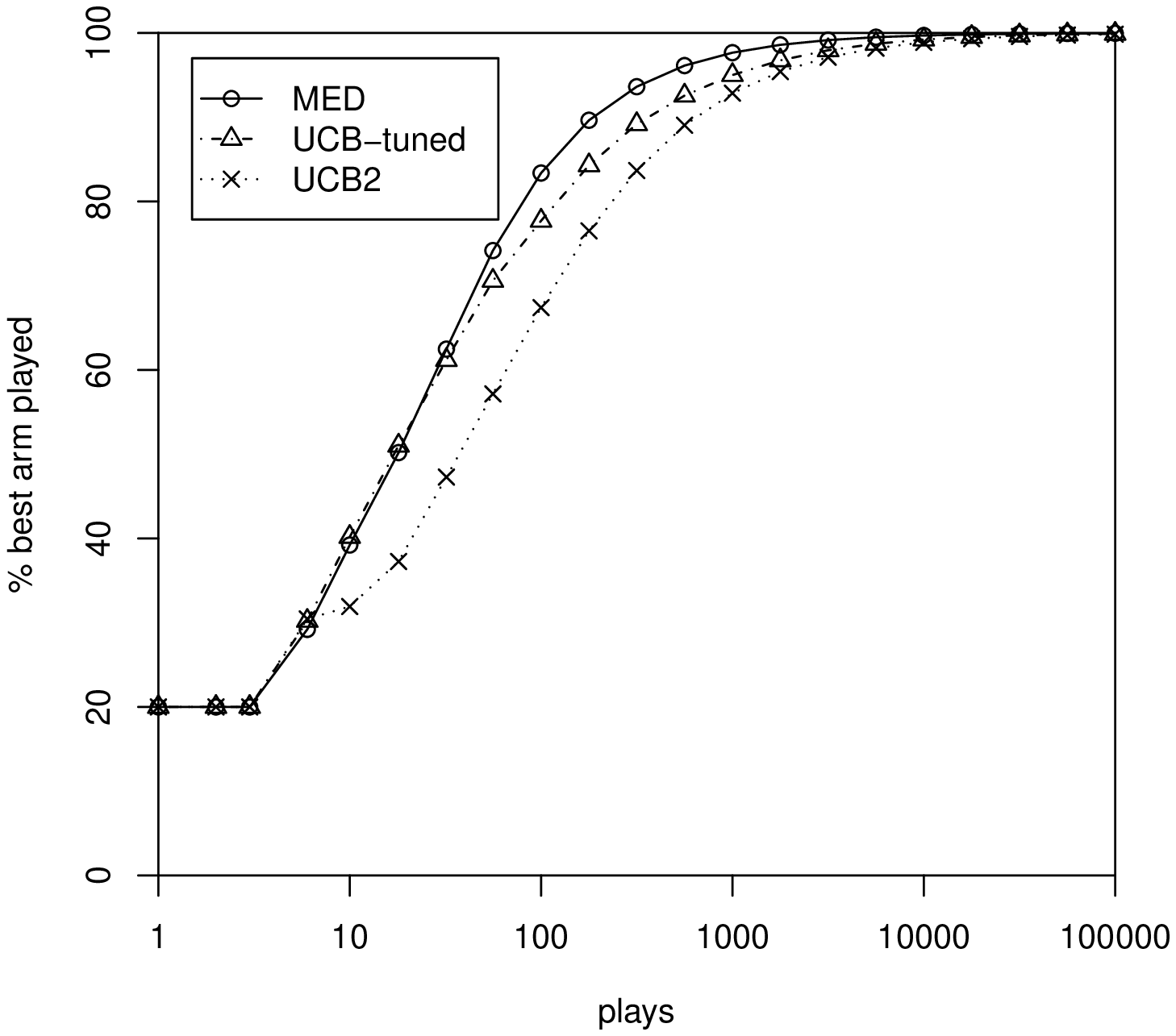}
  \end{center}
 \end{minipage}\vspace{-2mm}
 \caption{Simulation result for Distribution 6 (beta distributions).}
 \label{ex7}
\end{figure}

\section{Concluding remarks}
\label{section-remarks}
We proposed a policy, MED, 
and proved that our policy achieves the asymptotic bound 
for finite support models.  We also showed that
our policy can be implemented efficiently by a convex optimization
technique.

In the theoretical analysis of this paper, we assumed the finiteness of
the support although MED worked nicely
also for distributions with continuous bounded support in the simulation.
We conjecture that the optimality of MED holds also for the continuous bounded
support model.
In addition, there are many models that $\dmin$ can be
 computed explicitly,
 such as normal distribution model with unknown mean and
variance.
We expect that our MED can be extended to these models. 
Furthermore, our MED is a randomized policy and the theoretical evaluation
of the expectation includes randomization in the policy.
We may be able to construct a deterministic version of MED.

In addition to the above theoretical analyses, it is also important
to consider the finite horizon case.
Then it is necessary to derive a finite-time bound of MED for this case.
Especially, MED policy itself should be improved when the
number of rounds is given in advance.
In this setting, the value of ``exploration'' becomes
smaller and a current best arm is to be pulled more often as the
number of remaining rounds becomes smaller.

\bibliographystyle{plain}

\end{document}